\documentclass[11pt]{article}

\usepackage[margin=1in]{geometry}
\usepackage[T1]{fontenc}
\usepackage[utf8]{inputenc}
\usepackage{microtype}

\usepackage{amsmath,amssymb,amsfonts,amsthm,mathtools}
\usepackage{extarrows} 

\usepackage{enumitem}
\usepackage[normalem]{ulem}
\usepackage{xcolor}
\usepackage{todonotes}
\usepackage[showdeletions]{color-edits}

\addauthor[Yixuan]{yz}{blue}
\addauthor[Qiaomin]{qx}{red}

\usepackage{natbib}
\setcitestyle{authoryear,round}

\usepackage[
    unicode=true,
    pdfusetitle,
    bookmarks=true,
    bookmarksnumbered=true,
    bookmarksopen=false,
    breaklinks=true,
    pdfborder={0 0 0},
    colorlinks=true,
    citecolor=blue,
    linkcolor=blue,
    urlcolor=blue
]{hyperref}

\newcommand{\R}{\mathbb{R}}
\newcommand{\E}{\mathbb{E}}
\newcommand{\Pp}{\mathbb{P}}

\newcommand{\law}{\mathcal{L}}
\newcommand{\W}{\mathcal{W}}

\newcommand{\TV}{\mathrm{TV}}
\newcommand{\HS}{\mathrm{HS}}
\newcommand{\one}{\mathbf{1}}

\newcommand{\norm}[1]{\left\lVert #1\right\rVert}
\newcommand{\abs}[1]{\left\lvert #1\right\rvert}
\newcommand{\ip}[2]{\left\langle #1,#2\right\rangle}

\newcommand{\ot}{\otimes}
\newcommand{\contr}{\mathbin{\lrcorner}}

\newtheorem{theorem}{Theorem}
\newtheorem{lemma}{Lemma}
\newtheorem{proposition}{Proposition}
\newtheorem{corollary}{Corollary}
\theoremstyle{definition}
\newtheorem{assumption}{Assumption}
\theoremstyle{remark}
\newtheorem{remark}{Remark}

\allowdisplaybreaks

\title{Gaussian Approximation for Multivariate Martingale Sums from Uniformly Ergodic Markov Chains}

\author{
Yixuan Zhang and Qiaomin Xie
\\[0.8em]
Department of Industrial \& Systems Engineering, University of Wisconsin--Madison
\\
\texttt{\{yzhang2554,qiaomin.xie\}@wisc.edu}
}

\date{}

\begin{document}

\maketitle

\begin{abstract}
We develop Gaussian approximation bounds in higher-order Wasserstein distance
$\mathcal{W}_p$, $p\geq2$, for sums of multivariate martingale differences generated
by a uniformly ergodic Markov chain. Under an $L^{(2+\eta)p}$-moment
condition with $\eta>0$, we establish the explicit bound
\[
\mathcal{O}\Big(
    p^3\norm{A}_4^2
    +
    p\,d^{1/4}
    \norm{A}_2^{1/2}
    \norm{A}_4^2
\Big)
\]
where $A\in\mathbb{R}^n$ collects the $L^{(2+\eta)p}$-sizes of the $n$ individual martingale increments.  In the
balanced-increment regime where the individual increments have comparable
sizes of order $n^{-1/2}$, it
yields the first optimal $\mathcal{O}(n^{-1/2})$ Gaussian approximation rate
for fixed $p$ and $d$. Consequently, we also obtain the first optimal
$\mathcal{O}(n^{-1/2})$ $\mathcal{W}_p$ Gaussian approximation rate for multivariate
additive functionals of uniformly ergodic Markov chains.

Our analysis develops two techniques for addressing the interplay between
higher-order Wasserstein distance and temporal dependence. First, building on
the Ornstein--Uhlenbeck relative-score approach of \citet{fang2023p}, we
formulate the bound in terms of antisymmetric Stein couplings while retaining
the conditional tensor structure. Second,
we develop a refresh-then-maximal coupling that combines an independent
first-step resampling, which preserves the desired Stein identity, with a
subsequent maximal coupling that provides effective control of the coupling
increment. These tools may be useful more broadly for Gaussian approximation
under temporal dependence.
\end{abstract}

\noindent\textbf{Keywords:} Gaussian approximation, Wasserstein distance, martingale differences,
Markov chains, Stein's method, coupling.

\medskip

\section{Introduction}\label{sec:Intro}

Many models in operations research generate data sequentially from stochastic
systems with temporal dependence. Prominent examples include steady-state
simulation and Markov chain Monte Carlo, stochastic approximation and
stochastic optimization, and policy evaluation in reinforcement learning. In
these settings, the dominant stochastic component of an estimator can often be
represented as a sum of martingale differences. For additive functionals of
Markov chains, the Poisson equation makes this representation explicit by
decomposing a centered partial sum into a martingale sum and a telescoping
boundary term \citep{glynn2024solution}. Martingale-noise decompositions also
underlie the classical theory of asymptotic normality and efficiency for
Polyak--Ruppert averaging in stochastic approximation
\citep{polyak1992acceleration}.

The importance of martingale decompositions goes beyond asymptotic theory.
In finite-sample analyses of stochastic algorithms, the estimation error is
frequently decomposed into a leading martingale component and a collection of
remainder terms. This strategy is common, for example, in the analysis of
stochastic-gradient and temporal-difference methods. In particular, by combining quantitative Gaussian approximation for the martingale component
with separate control of the remainder terms, a growing body of recent work obtains finite-sample
guarantees for distributional approximation and uncertainty quantification
\citep{anastasiou2019normal,srikant2025rates,
samsonov2024gaussian,wu2025uncertainty,kong2026finite}.

These analyses reduce to a quantitative
Gaussian approximation problem for multivariate martingale-difference
sequences. Given an \(\mathbb R^d\)-valued martingale-difference sequence
\(\{D_i\}_{i=1}^n\), one seeks a
 bound on the discrepancy between the law of
\(\sum_{i=1}^n D_i\) and a multivariate Gaussian. In this paper, we address this question in higher-order
Wasserstein distance, focusing on a canonical class of martingale differences
induced by Markovian observations.

Specifically, let \(\{X_i\}_{i\geq 0}\) be a Markov chain on a state space
\(\mathsf X\), with transition kernel \(P\) and stationary distribution
\(\pi\), and let \(h\colon\mathsf X\to\mathbb R^d\) satisfy
\(\pi(h)=0\). In applications to stochastic algorithms, \(h\) typically
represents a centered noise function. The Poisson equation \(g-Pg=h\) yields
the martingale decomposition \citep{glynn2024solution}
\begin{equation}
\label{eq:poisson-martingale-decomposition}
    \sum_{i=1}^n h(X_{i-1})
    =
    \sum_{i=1}^n
    \underbrace{\bigl[g(X_i)-Pg(X_{i-1})\bigr]}_{=:D_i}
    +
    g(X_0)-g(X_n).
\end{equation}
Here, \(\{D_i\}_{i\geq 1}\) is a martingale-difference sequence, whereas
\(g(X_0)-g(X_n)\) is a telescoping boundary term. Consequently, Gaussian approximation of the additive functionals of Markov Chains, $\sum_{i=1}^n h(X_{i-1})$, reduces to Gaussian approximation of the Markov-chain-induced
martingale sum \(S_n:=\sum_{i=1}^n D_i\) together with separate control of the boundary term. Under the normalization
\(\operatorname{Cov}(S_n)=I_d\), we seek sharp bounds on
\begin{equation}
\label{eq:main-gaussian-approximation-question}
    \mathcal W_p\!\left(
        \law(S_n),
        \mathcal N(0,I_d)
    \right),
\end{equation}
where \(\law(S_n)\) denotes the law of \(S_n\). For \(p\geq1\), the
Wasserstein distance of order $p$ between probability measures
\(\nu\) and \(\mu\) on \(\mathbb R^d\) is defined by
\citep{villani2008optimal}\begin{equation}\label{eq:wp}
  \mathcal{W}_p(\nu,\mu)
  := \inf_{\gamma\in\Gamma(\nu,\mu)}
  \left( \mathbb{E}_{(X,Y)\sim \gamma}\bigl[\|X-Y\|^p\bigr] \right)^{1/p},
\end{equation}
where $\Gamma(\nu,\mu)$ denotes the set of couplings of $(\nu,\mu)$. 

Our focus on higher-order Wasserstein distance is motivated by several
applications. In the finite-sample analysis of stochastic algorithms,
\citet{kong2026finite} identify Gaussian approximation for martingale
differences in higher-order Wasserstein distance as a key missing ingredient
in the analysis of stochastic approximation. Moreover, bounds in
\(\W_p\) with explicit dependence on the Wasserstein order \(p\) can serve as
useful inputs for studying Gaussian approximation in moderate- and
large-deviation regimes \citep{fang2023p,wang2026tail}.

Despite the relevance of this problem, the existing theory remains
incomplete in the regime needed for these applications.
Although quantitative Gaussian approximation in higher-order Wasserstein distances
are well developed for independent sums in both the scalar
\citep{bobkov2018berry} and multivariate settings
\citep{bonis2020stein,bonis2024improved}, comparable results for
multivariate martingale-difference sequences remain limited.
In particular, available results primarily control smooth
test-function metrics \citep{anastasiou2019normal} or focus on
\(\W_1\) \citep{srikant2025rates,wu2025uncertainty}.
For scalar martingales, \citet{guo2026rate} establishes higher-order
Wasserstein bounds for orders up to three. The techniques developed
in the scalar setting, however, do not readily extend to multivariate
distributions; see the discussion in
\citet[Section~1.1]{zhang2026wasserstein}.
Moreover, the bounds of \citet{guo2026rate} are developed for general
martingale-difference sequences and do not exploit the additional
structure available when the martingale differences are generated
by a Markov chain. Their specialization to this setting can
therefore be conservative.

Consequently, to the best of our knowledge, a higher-order \(\W_p\) Gaussian
approximation theory for multivariate martingale differences generated by
Markov chains is not currently available. The present paper fills this gap by
developing such a theory for every \(p\geq2\).
\subsection{Our Contributions}
\noindent\textbf{$\W_p$ ($p \geq 2$) Gaussian approximation for Markov chain induced martingales.} We establish a nonasymptotic Gaussian approximation bound in \(\W_p\), \(p\geq2\), for weighted sums of multivariate martingale differences generated through the Poisson equation of a uniformly ergodic Markov chain. Theorem~\ref{thm:main} gives an explicit bound in terms of the \(L^{(2+\eta)p}\)-sizes of the individual increments, with specific dependence on both the Wasserstein order \(p\) and the dimension \(d\). To the best of our knowledge, this is the first such result for multivariate Markov chain induced martingale differences when \(p\geq2\). In the
balanced-increment regime where the individual increments have comparable
sizes of order \(n^{-1/2}\), the bound yields the optimal  \(O(n^{-1/2})\) rate.

\medskip
\noindent\textbf{Sharp Gaussian approximation for additive functionals.} Through the Poisson decomposition, our martingale result yields a Gaussian approximation bound for multivariate additive functionals of Markov chains. For fixed \(p\) and \(d\), Corollary~\ref{cor:additive-functional} establishes the first optimal  \(O(n^{-1/2})\) bound in \(\W_p\) under an \(L^{(2+\eta)p}\)-moment condition. This improves the \(O(n^{-1/4})\) rate available from prior higher-order Wasserstein results under weaker geometric ergodicity conditions \citep{zhang2026wasserstein}.

\medskip
\noindent\textbf{Technical contributions.}
\begin{itemize}[leftmargin=8pt]

\item \textbf{A Tensor-Level Relative-Score Bound via Antisymmetric Stein Couplings.}
Building on the Stein-method framework of \citet{bonis2020stein},
\citet{fang2023p} develop higher-order Wasserstein bounds for Gaussian
approximation using generalized exchangeable pairs. Closely related to this
formulation, we work with an antisymmetric Stein coupling
\((W,W',G)\) satisfying
\[
    (W,W',G)
    \stackrel{\mathrm d}{=}
    (W',W,-G).
\]
Our first technical ingredient is to formulate the relative-score argument
underlying the proof of \citet[Theorem~7.1]{fang2023p} directly in terms of
the conditional tensors arising from an antisymmetric Stein coupling; see
Proposition~\ref{prop:OU-Stein}. Rather than invoking the final
absolute-moment bound of \citet{fang2023p}, we retain the conditional tensor
structure so that it can be exploited to obtain sharp bounds
in our subsequent Markovian analysis. This formulation is essential for
deriving our results under moment assumptions only slightly stronger than
order \(2p\); in contrast, a direct application of
\citet[Theorem~7.1]{fang2023p}
would naturally require control of moments of order at least \(4p\).

\item \textbf{A Refresh-Then-Maximal Coupling.}
We construct an antisymmetric Stein coupling tailored to Markov chain induced
martingale sums. The construction combines an independent sampling of the
first transition with a symmetric blockwise maximal coupling of the
subsequent trajectories.  The refreshed first transition generates the
auxiliary variable \(G\), whereas the
subsequent maximal-coupling stage gives a geometrically decaying meeting time
and thereby  controll the coupling increment \(\Delta\). In addition, the construction preserves prefix measurability, which allows
us to exploit the mixing properties of the Markov chain to control the
quantities appearing in Proposition~\ref{prop:OU-Stein}.

\end{itemize}

\subsection{Organization and Notation}

The remainder of the paper is organized as follows.
Section~\ref{sec:problem-setup} introduces the problem setup and the
Markov chain induced martingale differences.
Section~\ref{sec:main-results} presents our main Gaussian approximation
results and their consequence for additive functionals of Markov chains.
Section~\ref{sec:OU} develops a tensor-level refinement of the Ornstein--Uhlenbeck relative-score bound for antisymmetric Stein couplings.
Section~\ref{sec:coupling} introduces the refresh-then-maximal coupling for
Markov chains and verifies that it yields the antisymmetric Stein coupling
required by our framework. The proofs of the remaining technical results are
provided in the appendix. 

Throughout the paper, \(\norm{\cdot}\) and \(\langle\cdot,\cdot\rangle\) denote the
Euclidean norm and inner product, respectively. For matrices and tensors,
\(\norm{\cdot}_{\mathrm{op}}\) and \(\norm{\cdot}_{\HS}\) denote the operator
and Hilbert--Schmidt norms, with $\norm{x_1\ot\cdots\ot x_k}_{\HS}
    =
    \prod_{j=1}^k \norm{x_j}.$ We write \(x^{\ot k}\) for the \(k\)-fold tensor product of \(x\) with itself,
and \(A\contr B\) for contraction over matching tensor indices. We denote by $\operatorname{Sym}^k(\R^d)$ the space of symmetric \(k\)-tensors on \(\R^d\), that is, tensors
\(T\in(\R^d)^{\ot k}\) whose entries are invariant under permutations of their
indices. For a deterministic vector \(a=(a_1,\ldots,a_n)\in\R^n\), we write its $L^r$ norm as $\norm{a}_r
    :=
    \left(
        \sum_{i=1}^n |a_i|^r
    \right)^{1/r}$ for any $1\leq r<\infty$ and $\norm{a}_\infty:=\max_{1\leq i\leq n}|a_i|.$

For an \(\mathsf H\)-valued random variable \(X\), where \(\mathsf H\) is a
finite-dimensional Hilbert space, write $\norm{X}_{L^r(\mathsf H)}
    :=
    \bigl(\E[\norm{X}_{\mathsf H}^r]\bigr)^{1/r}.$ We omit \(\mathsf H\) when the ambient norm is clear; in particular,
\(\norm{X}_{L^r}:=(\E[\norm{X}^r])^{1/r}\) for \(X\in\R^d\), and
\(\norm{T}_{L^r(\HS)}:=(\E\norm{T}_{\HS}^r)^{1/r}\) for random tensors.
A subscript on \(L^r\), as in \(L_Z^r\), indicates that the norm is taken
only over the specified randomness $Z$. For measurable \(f:\mathsf X\to\mathsf H\), write $\norm{f}_{L^r(\pi;\mathsf H)}
    :=
    \left(\int_{\mathsf X}\norm{f(x)}_{\mathsf H}^r\,\pi(\mathrm dx)\right)^{1/r}$ and $
    \pi(f):=\int_{\mathsf X}f(x)\,\pi(\mathrm dx).$ If \(\norm{f}_{L^r(\pi;\mathsf H)}<\infty\), we write
\(f\in L^r(\pi;\mathsf H)\). When the ambient Hilbert space
\(\mathsf H\) is clear from context, we abbreviate
\(L^r(\pi;\mathsf H)\) as \(L^r(\pi)\).

For probability measures \(\mu\) and \(\nu\) on the same measurable space, $\norm{\mu-\nu}_{\TV}
    :=
    \sup_A |\mu(A)-\nu(A)|$ denotes their total variation distance. Finally, \(C(\eta,C_0,\rho_0)\) denotes a generic finite positive constant,
possibly varying from line to line, that depends only on
\((\eta,C_0,\rho_0)\).

\section{Problem Setup}
\label{sec:problem-setup}

Let \((\mathsf X,\mathcal B)\) be a standard Borel space, and let
\(\{X_k\}_{k\geq0}\) be a stationary Markov chain with state space \(\mathsf X\), transition kernel
\(P\), and stationary distribution \(\pi\).  Unless stated otherwise, all
expectations and covariances are taken under the stationary distribution \(\pi\). We assume
uniform ergodicity of the Markov chain \(\{X_k\}_{k\geq0}\)~\citep{meyn2012markov}.
\begin{assumption}\label{assumption:MC}
There exist \(C_0<\infty\) and
\(\rho_0\in(0,1)\) such that
\begin{equation}
\label{eq:UGE}
    \sup_{x\in\mathsf X}
    \norm{P^m(x,\cdot)-\pi}_{\TV}
    \leq C_0\rho_0^m,
    \qquad m\geq 0.
\end{equation}
\end{assumption}
Uniform ergodicity is widely used in operations research to analyze stochastic
systems driven by Markovian dynamics. Representative examples include inventory
control \citep{huh2009adaptive,tang2024online}, stochastic approximation and reinforcement
learning under Markovian data
\citep{chen2021lyapunov,zhang2024constant,li2024q}. In these settings, uniform
geometric forgetting of the initial state provides a  quantitative
stability condition for establishing finite-time performance guarantees.

The following lemma justifies the existence of a solution to the Poisson equation.

\begin{lemma}
\label{lem:Lr-Poisson}
Fix \(r\in(1,\infty)\). Under Assumption \ref{assumption:MC}, for every
\(m\geq0\) and every \(f\in L^r(\pi)\), 
\begin{equation}
\label{eq:Lr-uniform-decay}
    \norm{(P^m-\Pi)f}_{L^r(\pi)}
    \leq
    2\bigl(C_0\rho_0^m\bigr)^{1-1/r}
    \norm{f}_{L^r(\pi)},
\end{equation}
where \(\Pi f:=\pi(f)\).
Consequently, if \(f\in L^r(\pi)\) and \(\pi(f)=0\), then  $ \sum_{m=0}^N P^m f$ converge in \(L^r(\pi)\), as \(N\to\infty\), to a function
\(g_f\in L^r(\pi)\) satisfying $g_f-Pg_f=f$ and $\pi(g_f)=0.$
\end{lemma}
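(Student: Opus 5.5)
The plan is to interpolate between the two endpoint estimates for the operator $Q_m:=P^m-\Pi$: an $L^\infty$ bound, where uniform ergodicity enters, and an $L^1(\pi)$ bound, where stationarity enters. I will argue directly with H\"older's inequality rather than invoke Riesz--Thorin, since Riesz--Thorin would lose the explicit constant. For $f\in L^r(\pi)$, write pointwise
\[
Q_mf(x)=\int f(y)\,\bigl(P^m(x,\mathrm dy)-\pi(\mathrm dy)\bigr)=\int f\,\mathrm d\sigma_x,
\]
where $\sigma_x:=P^m(x,\cdot)-\pi$ is a signed measure. By Assumption~\ref{assumption:MC}, its total mass satisfies $|\sigma_x|(\mathsf X)=2\norm{P^m(x,\cdot)-\pi}_{\TV}\le 2C_0\rho_0^m$. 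It is also trivially at most $2$, which I keep in mind in case the constant needs $\min(1,C_0\rho_0^m)$.

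\textbf{Pointwise H\"older step.} Apply H\"older with respect to $|\sigma_x|$:
\[
|Q_mf(x)|^r\le |\sigma_x|(\mathsf X)^{r-1}\int |f|^r\,\mathrm d|\sigma_x|.
\]
Then bound $|\sigma_x|\le P^m(x,\cdot)+\pi$ and integrate in $\pi(\mathrm dx)$. Stationarity gives $\int\!\int|f|^r\,\mathrm dP^m(x,\cdot)\,\pi(\mathrm dx)=\norm{f}_{L^r(\pi)}^r$, so the right side integrates to at most $2\norm f_{L^r(\pi)}^r$. Hence
\[
\norm{Q_mf}_{L^r(\pi)}^r\le (2C_0\rho_0^m)^{r-1}\cdot 2\norm f_{L^r(\pi)}^r,
\]
which is $\norm{Q_mf}_{L^r}\le 2(C_0\rho_0^m)^{1-1/r}\norm f_{L^r}$, exactly~\eqref{eq:Lr-uniform-decay}.

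\textbf{Main obstacle.} The only delicate point is measurability and the definition of $Pf$ for $f\in L^r(\pi)$, which is defined only $\pi$-a.e. I will handle it as follows. Because $\pi P^m=\pi$, the function $\int|f|\,\mathrm dP^m(x,\cdot)$ is finite for $\pi$-a.e.\ $x$, and it does not depend on the choice of version of $f$ outside a $\pi$-null set. This uses $\pi(N)=0\Rightarrow P^m(x,N)=0$ for $\pi$-a.e.\ $x$, which again follows from stationarity. Measurability of $x\mapsto|\sigma_x|$-integrals comes from the standard Borel assumption and the Hahn decomposition; alternatively, one can bound via $\int|f|^r\,\mathrm dP^m(x,\cdot)$ and $\pi(|f|^r)$ directly, which avoids $|\sigma_x|$ as a kernel.

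\textbf{Consequence.} If $\pi(f)=0$, then $\Pi f=0$, so $P^mf=Q_mf$ and $\norm{P^mf}_{L^r}\le 2(C_0\rho_0^m)^{1-1/r}\norm f_{L^r}$. This is summable since $\rho_0^{1-1/r}<1$, so the partial sums $\sum_{m\le N}P^mf$ form a Cauchy sequence in the Banach space $L^r(\pi)$ and converge to some $g_f$. Two properties remain.
\begin{itemize}[leftmargin=8pt]
\item $P$ is a contraction on $L^r(\pi)$ (Jensen plus stationarity), hence continuous. Therefore $Pg_f=\lim_N\sum_{m=1}^{N+1}P^mf$, and $g_f-Pg_f=\lim_N(f-P^{N+1}f)=f$, since $\norm{P^{N+1}f}_{L^r}\to0$.
\item $\pi(\cdot)$ is continuous on $L^r(\pi)$, since $|\pi(u)|\le\norm u_{L^1}\le\norm u_{L^r}$, and $\pi(P^mf)=\pi(f)=0$ for every $m$. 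Hence $\pi(g_f)=0$.
\end{itemize}
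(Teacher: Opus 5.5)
Your proof is correct. For the consequence part (a Cauchy series in $L^r(\pi)$, plus continuity of $P$ and of $\pi(\cdot)$ on $L^r(\pi)$) it coincides with the paper's, and you are slightly more careful in justifying $\pi(g_f)=0$.

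For the decay estimate \eqref{eq:Lr-uniform-decay} you take a different, more elementary route. The paper proves the two endpoint bounds $\norm{(P^m-\Pi)f}_{L^1(\pi)}\le 2\norm{f}_{L^1(\pi)}$ and $\norm{(P^m-\Pi)f}_{L^\infty(\pi)}\le 2C_0\rho_0^m\norm{f}_{L^\infty(\pi)}$, and then invokes Riesz--Thorin. You instead apply H\"older's inequality pointwise with respect to the total-variation measure $|\sigma_x|$ and integrate using stationarity. This is essentially a Schur-test argument that carries out the interpolation by hand for this kernel operator.

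Your remark that Riesz--Thorin ``would lose the explicit constant'' is not accurate. With endpoint norms $M_0=2$ on $L^1$ and $M_1=2C_0\rho_0^m$ on $L^\infty$, it gives $\norm{P^m-\Pi}_{L^r\to L^r}\le M_0^{1/r}M_1^{1-1/r}=2(C_0\rho_0^m)^{1-1/r}$, which is exactly the stated constant. Both routes therefore yield the same bound.

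What your route buys is self-containedness. It needs no interpolation theorem. It handles $\R^d$-valued $f$ directly through $\norm{\int f\,\mathrm d\sigma_x}\le\int\norm{f}\,\mathrm d|\sigma_x|$; the paper's application of Riesz--Thorin to $\R^d$-valued functions tacitly relies on a vector-valued form of the theorem or a complexification argument. And, as you note, bounding by $P^m\norm{f}^r(x)+\pi(\norm{f}^r)$ sidesteps any measurability question about $x\mapsto|\sigma_x|$. The paper's route is shorter given the theorem, and it makes transparent that only the two endpoint bounds matter.
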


Fix \(p\geq 2\) and \(\eta>0\), and set \(q:=(2+\eta)p\). Let \(h\in L^q(\pi)\) satisfy
\(\pi(h)=0\). Define
\begin{equation}
\label{eq:g-canonical}
    g:=\sum_{m=0}^{\infty}P^m h.
\end{equation}
Lemma~\ref{lem:Lr-Poisson} implies that $g\in L^q(\pi).$ Let \(\mathcal F_i:=\sigma(X_0,\ldots,X_i)\) be the $\sigma$-algebra. For deterministic matrices
\(M_1,\ldots,M_n\in\R^{d\times d}\), define
\begin{equation}
\label{eq:DYS}
    D_i:=g(X_i)-Pg(X_{i-1}),
    \qquad
    Y_i:=M_iD_i,
    \qquad
    S_n:=\sum_{i=1}^n Y_i.
\end{equation}
By the Markov property,
\(\E[D_i\mid\mathcal F_{i-1}]=0\); hence, both
\(\{D_i\}_{i=1}^n\) and \(\{Y_i\}_{i=1}^n\) are martingale-difference
sequences.  Our objective is to obtain quantitative Gaussian approximation bounds for
\(\law(S_n)\) in \(\W_p\).
\begin{remark}[Arbitrary Initialization]
The quantitative Gaussian approximation with stationary initialization can be
extended to an arbitrary initial distribution satisfying suitable moment
conditions, by a standard maximal coupling argument; see, e.g., the discussion
in \cite[EC.5.2]{zhang2026wasserstein}. For the ease of exposition, we
work throughout with the stationary setting. 
\end{remark}
\begin{remark}[Role of the matrix weights]
\label{rem:matrix-weights}
The weight matrices \(\{M_i\}_{i=1}^n\) allow us to accommodate the nonuniform propagation of
noise in stochastic algorithms. Indeed, unrolling a linear or linearized
recursion often yields a leading stochastic term of the form
\(\sum_{i=1}^n M_iD_i\), where \(M_i\) represents products of transition or
Jacobian matrices and, possibly, averaging weights. This weighted formulation
therefore covers a broader class of algorithmic applications, while
\(M_i=I_d\) for all \(i\) recovers the canonical Gaussian approximation
problem for the unweighted martingale sum.
\end{remark}

\section{Main Results}
\label{sec:main-results}

We now state our main result. For each
\(1\leq i\leq n\), define
\[
    a_i
    :=
    \norm{Y_i}_{L^q},
    \qquad
    A:=(a_1,\ldots,a_n)\in \R^n.
\]
Thus, \(A\) records the \(L^q\)-size of each weighted martingale
increment \(Y_i=M_iD_i\). The following theorem shows that the
Gaussian approximation error for \(\law(S_n)\) can be controlled explicitly in terms of
these increment sizes, while retaining the dependence on the
Wasserstein order \(p\) and the dimension \(d\).

\begin{theorem}[Gaussian Approximation for Weighted Markov Chain induced Martingale Sums]
\label{thm:main}
Under the setup of Section~\ref{sec:problem-setup}, Assumption~\ref{assumption:MC},
and the normalization \(\operatorname{Cov}(S_n)=I_d\),
\begin{align}
\label{eq:main-bound}
\W_p\!\left(
    \law(S_n),
    \mathcal N(0,I_d)
\right)
\leq
C(\eta,C_0,\rho_0)
\Big(
&
    p^3\norm{A}_4^2
    +
    p\,d^{1/4}
    \norm{A}_2^{1/2}
    \norm{A}_4^2
\Big).
\end{align}
\end{theorem}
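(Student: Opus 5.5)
The plan has two stages. First, establish a tensor-level Ornstein--Uhlenbeck relative-score bound for antisymmetric Stein couplings, in the spirit of \citet[Theorem~7.1]{fang2023p}. Then build such a coupling for $S_n$ and control the resulting conditional tensors using the mixing of the chain.

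\textbf{Relative-score bound.} Let $(W,W',G)$ satisfy $(W,W',G)\stackrel{\mathrm d}{=}(W',W,-G)$ with $W=S_n$, and set $\Delta:=W'-W$. Antisymmetry gives the Stein identity
\[
\E[\langle G,f(W')+f(W)\rangle]=0,
\]
which can be rewritten as $\E[\langle G,\nabla f(W)\Delta\rangle]+\cdots\approx\E[\langle W,f(W)\rangle]$. Along the OU semigroup $W_t=e^{-t}W+\sqrt{1-e^{-2t}}Z$, I would bound $\W_p$ by $\int_0^\infty\norm{\rho_t(W_t)}_{L^p}\,dt$, where $\rho_t$ is the relative score. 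Expanding $\rho_t$ via the Stein identity in a Taylor series of $\Delta$, it is expressed through the conditional tensors
\[
T_k:=\E\bigl[G\ot\Delta^{\ot (k-1)}\,\big|\,W\bigr],\qquad k\ge 1,
\]
with $T_2-I_d$ as the leading term and higher $T_k$ weighted by Hermite-type factors $e^{-kt}(1-e^{-2t})^{-(k-1)/2}$. Following Bonis/Fang, truncating at $k=2$ and using a remainder integral in $t$ split at a small time $t_0$ should give
\[
\W_p\lesssim \norm{T_2-I}_{L^p(\HS)}+p\,\sup_t(\text{third-order remainder})+\text{tail}.
\]
The key difference from \citet{fang2023p} is that I do not take absolute moments of $G\ot\Delta$ before conditioning. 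Keeping the conditional expectations is what allows moments of order $(2+\eta)p$ instead of $4p$, via H\"older with the exponent split $q=(2+\eta)p$.

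\textbf{Coupling construction.} Pick a uniform index $I$. Let $X'_I$ be an independent draw from $P(X_{I-1},\cdot)$, so that $D'_I=g(X'_I)-Pg(X_{I-1})$. Then run the two continuations from $X_I$ and $X'_I$ under a symmetric maximal coupling until they meet at time $\tau$. This yields $W'$, and I set $G:=n(Y_I'-Y_I)/2$ (suitably scaled), which makes the triple exchangeable. Uniform ergodicity gives $\Pp(\tau>m)\le C_0\rho_0^m$, and Lemma~\ref{lem:Lr-Poisson} controls the sizes of $g$ and $D_i$. Consequently $\Delta=\sum_{j=I}^{I+\tau}(Y'_j-Y_j)$ satisfies $\norm{\Delta}_{L^{r}}\lesssim a_I$, up to constants depending on $(\eta,C_0,\rho_0)$, by H\"older between the geometric tail of $\tau$ and the $L^q$ moments.

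\textbf{Controlling the tensors.} First, $\E[G\ot\Delta]$ has mean $I_d$ because $\operatorname{Cov}(S_n)=I_d$. The fluctuation $T_2-I$ is a sum over $i$ of centered terms $\phi_i(X_{i-1},\dots)$ that are approximately prefix measurable and have weights $a_i^2$. I would bound its $L^p$ norm with a martingale/Rosenthal inequality for mixing sequences, using the decay from Lemma~\ref{lem:Lr-Poisson} for the correlations. This should produce $\sqrt p\,(\sum a_i^4)^{1/2}=\sqrt p\norm{A}_4^2$, with extra powers of $p$ (from Burkholder constants and the remainder) accumulating to $p^3\norm{A}_4^2$. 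The third-order terms $\E[G\ot\Delta^{\ot2}\mid W]$, integrated against the Gaussian smoothing with Hermite factors, should give $\sum a_i^3\le\norm{A}_2\norm{A}_4^2$. Interpolating with the trivial bound at small $t$ and optimizing $t_0$ then gives the geometric-mean term $d^{1/4}\norm{A}_2^{1/2}\norm{A}_4^2$, where $d^{1/4}$ comes from the Gaussian Hermite tensor norm $\E\norm{H_2(Z)}_{\HS}\sim d$ raised to a power.

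\textbf{Main obstacle.} I expect the hardest step to be the concentration bound for $T_2-I$ in $L^p$. The conditional expectations given $W$ do not factor across time, and the coupling variables depend on the future of the chain. I would first replace conditioning on $W$ by conditioning on the full path, which is allowed by Jensen. I would then use prefix measurability together with the geometric meeting time to approximate each summand by an $\mathcal F_{i+m}$-measurable function, and apply a blocking argument with block length about $\log$ of the relevant scale, with Burkholder's inequality within each block family.
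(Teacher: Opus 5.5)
Your proposal has the right two-stage outline, but there are two genuine gaps. The first is in the score bound, which does not close as described, and it is also the step that produces the second term of the theorem. The second is that your coupling lacks the prefix-measurability property you rely on at the end.

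\textbf{The score bound and the remainder.} You expand to second order in $\Delta$, with a ``third-order remainder'' and an unspecified tail, and control these by splitting in time at $t_0$. Under only $L^{(2+\eta)p}$ moments this cannot work.
\begin{itemize}
\item The Gaussian-shift factor $L_t=\exp\{\ip{\Delta}{Z}/\delta_t-\norm{\Delta}^2/(2\delta_t^2)\}$ satisfies $\norm{L_t}_{L^p_Z}=e^{(p-1)\norm{\Delta}^2/(2\delta_t^2)}$. So polynomial-moment control of any expansion of $G(L_t-1)$ is available only where $\norm{\Delta}\lesssim\delta_t/\sqrt p$.
\item Even termwise, the $L^p$-fluctuation of $\E[G\ot\Delta^{\ot k}\mid W]$ needs $L^s$-control ($s>p$) of $\norm{\Xi_i}\norm{\Delta_i}^k$. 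That means moments beyond order $(k+1)p$. Keeping conditional expectations does not fix this by itself.
\end{itemize}
The missing idea is the $t$-dependent truncation $\mathcal A_t=\{\norm{\Delta}\le\tau_t\}$, with $\tau_t=\delta_t/(4\sqrt p)$, combined with antisymmetry.
\begin{itemize}
\item The large-increment part of the Stein identity becomes $R_t=2\E[G\one_{\mathcal A_t^c}\mid W]$. This is centered and dominated termwise by $\tau_t^{-1}\norm{\Xi_i}\norm{\Delta_i}$.
\item On $\mathcal A_t$, each $\Xi_i\ot\Delta_i^{\ot k}$ is dominated by $\tau_t^{k-1}\norm{\Xi_i}\norm{\Delta_i}$. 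So only products of two increments ever need $L^s$ control, which is exactly where $2s<q$ enters.
\item The fluctuations of $R_t$ and of all $T_{k,t}$ then give the $p^3\norm{A}_4^2$ term.
\end{itemize}
The deterministic means are handled separately.
\begin{itemize}
\item Even-order means vanish by $(\Xi_i,\Delta_i)\stackrel{\mathrm d}{=}(-\Xi_i,-\Delta_i)$. In particular the mean of $\E[G\ot\Delta^{\ot2}\mid W]$ is zero, so your $\sum_ia_i^3$ is not the relevant quantity.
\item The remaining means are bounded through $\mathfrak m_1(\tau)\le\min\{C_1,\tau^{-2}C_3\}$ and $\mathfrak m_3(\tau)\le\min\{C_3,\tau^2C_1\}$.
\item With the weights $\sqrt p/\delta_t$, $p^{3/2}/\delta_t^3$ and $\tau_t^2=\delta_t^2/(16p)$, these integrate via $\int_0^\infty e^{-t}\min\{a/\delta_t,b/\delta_t^3\}\,\mathrm dt\le2\sqrt{ab}$ to $O(p\sqrt{C_1C_3})$.
\item Finally, $C_3\lesssim\norm{A}_4^4$ and $C_1\lesssim\sqrt d\,\norm{A}_2$. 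The latter uses $\sum_i\E\norm{Y_i}^2=\operatorname{tr}\operatorname{Cov}(S_n)=d$, and this is the source of $d^{1/4}$, not a Hermite tensor norm.
\end{itemize}
Your interpolation cannot recover this term. A crude bound $\sqrt{d}/\delta_t$ on $[0,t_0]$ against a $\sum_ia_i^3\,e^{-t}\delta_t^{-2}$ term on $[t_0,\infty)$ loses a logarithm. Moreover, $\sum_ia_i^3\le\norm{A}_2\norm{A}_4^2$ already exceeds $d^{1/4}\norm{A}_2^{1/2}\norm{A}_4^2$, because $\norm{A}_2\ge\sqrt d$.

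\textbf{Prefix measurability of the coupling.} Consider a global symmetric maximal coupling of the two continuations, the kind for which $\Pp(\tau>m)\lesssim\rho_0^m$ follows directly from uniform ergodicity. Realized conditionally on the observed path, the law of the copy may depend on the whole future of that path. Then $\E[\Phi_i\one_{\{\tau\le m\}}\mid\mathcal X]$ need not be a function of $X_{i-1},\ldots,X_{i-1+m}$. The approximation by $\mathcal F_{i+m}$-measurable summands, equivalently the decay of the positive-lag projections $\Pi_{i-1+\ell}U_i$, is then unavailable. A one-step greedy maximal coupling is causal, but it need never meet, since $P(u,\cdot)$ and $P(v,\cdot)$ can be mutually singular.

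The paper's construction works as follows.
\begin{itemize}
\item It refreshes the first step independently of the entire observed path, which gives the exact Stein identity.
\item On blocks of length $L$ with $2C_0\rho_0^L\le\frac12$, it draws the copy's block endpoint from a symmetric maximal coupling of $P^L(u,\cdot)$ and $P^L(v,\cdot)$, disintegrated against the observed endpoint.
\item It fills the block interior with Markov bridges.
\end{itemize}
This gives exchangeability and a geometric meeting time. It also ensures that conditional expectations of functionals of the coupled prefixes up to a block boundary depend only on the observed prefix up to that boundary.

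\textbf{A smaller error.} Your bound $\norm{\Delta_I}_{L^r}\lesssim a_I$ is false for non-constant weights $M_j$, since $\Delta_i$ involves $Y_j$ for all $j\ge i$. The correct size is $\overline a_i=\sum_{\ell\ge0}\rho^\ell a_{i+\ell}$, handled by Young's inequality. In $L^{2s}$ the decay rate is only $e^{-c\ell/p}$, which costs a factor of $p$ and contributes to $B_p=p^{5/2}\norm{A}_4^2$.
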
 
\noindent\textbf{Comparison with existing work.} To the best of our knowledge, Theorem~\ref{thm:main} provides the first Gaussian approximation bound in $\W_p$ with $p\geq 2$, for sums of multivariate martingale differences generated by a Markov chain. Prior work has primarily focused on the $\W_1$ setting \citep{srikant2025rates,wu2025uncertainty}, while results for general $\W_p$ have been obtained only in the univariate setting with $p\leq 3$ \citep{guo2026rate}. It is highly
nontrivial to extend these results to the multivariate \(\W_p\) setting, \(p\geq2\). In particular, for multivariate \(\W_1\), the Kantorovich--Rubinstein duality
provides an exact integral probability metric (IPM) representation, while in
the univariate setting higher-order Wasserstein distances can be controlled by
suitable IPMs, such as Zolotarev's ideal metrics. These reductions make
IPM-based Stein techniques applicable in the multivariate \(\W_1\) and
univariate \(\W_p\) settings. In contrast, IPM reductions are not
available for general multivariate \(\W_p\) (\(p\geq2\)), so these techniques do not directly extend to
the present setting; see also the discussion in
\citet{zhang2026wasserstein}.

\medskip
\noindent\textbf{Sample size dependence.}
We next discuss the dependence of our bound on the sample size. Consider the balanced-increment regime where the summands have comparable scales with $\max_{1\le i\le n} a_i \in O(n^{-1/2})$ for fixed $p$ and $d$. It then follows that\begin{align*} \norm{A}_2\in O\big(1\big), \qquad \norm{A}_4^2 \in  O\big(n^{-1/2}\big) \end{align*}
Therefore, for fixed $p$ and $d$, Theorem~\ref{thm:main} yields
\[
\W_p\left(
\law(S_n),
\mathcal N(0,I_d)
\right)
=O(n^{-1/2}).
\] 
Thus, our bound recovers the canonical \(O(n^{-1/2})\) Gaussian approximation 
rate in the balanced-increment regime, thereby addressing the missing
higher-order Wasserstein Gaussian approximation ingredient identified by
\citet{kong2026finite}. We remark that this dependence on $n$ is optimal in general, since the class considered here contains independent sums as a special case. 

\citet{guo2026rate} considers general martingale difference sequences, and therefore does not exploit the additional structure available when the martingale differences are generated by a Markov chain. As a result, the sample-size dependence of their general bounds can be suboptimal when specialized to our setting. In particular, in the scalar balanced-increment regime, the bounds in \citet[Theorems~7 and~8]{guo2026rate} produce only an $O(n^{-1/4})$  bound in $\W_2$ and an $O(n^{-1/6})$  bound in $\W_3$, even under the stronger assumption that the martingale differences are almost surely bounded. The sharp rate obtained here is not merely a multivariate extension of the existing scalar theory: exploiting the Markovian structure is essential for recovering the optimal dependence on the sample size.

\medskip
\noindent\textbf{Wasserstein order dependence.}
Theorem~\ref{thm:main} provides an explicit dependence on the Wasserstein order
\(p\), although we do not claim that the dependence on $p$ is optimal.
In the more restrictive bounded balanced-increment regime $\sup_{p\geq 2}\max_{1\leq i\leq n}a_i\in O(n^{-1/2}),$ for fixed \(d\), our bound simplifies to
\[
    \W_p
    \lesssim
    \frac{p^3}{\sqrt n}.
\]
For comparison, for standardized sums of independent sub-exponential random
vectors, \citet{fang2023p} obtain
\[
    \W_p
    \lesssim
    \frac{p}{\sqrt n}
    +
    \frac{p^{5/2}}{n},
\]
which yields an essentially linear dependence \(O(p/\sqrt n)\) over the range
\(p\lesssim n^{1/3}\). \citet{fang2023p} further highlight that an explicit
polynomial control in \(p\) is a key ingredient in deriving sharp
Cram\'er-type moderate-deviation bounds. In comparison, the polynomial
dependence on \(p\) in Theorem~\ref{thm:main} is generally weaker.
Nevertheless, our analysis provides a starting point for developing
analogous results for Markov-chain-induced martingale differences. Sharpening
the  $p$ dependence, and determining whether the near-linear scaling attainable
in the independent setting can also be achieved under Markovian dependence,
remain interesting directions for future work.

\medskip
\noindent\textbf{Dimension dependence.}
Theorem~\ref{thm:main} makes part of the dimension dependence explicit
through the factor \(d^{1/4}\). We emphasize, however, that the overall
dependence on \(d\) is also implicit in the increment sizes
\(a_i=\norm{Y_i}_{L^q}\), and hence in \(A=(a_1,\ldots,a_n)\), as is also
the case in related bounds for independent sums \citep{bonis2020stein}.
In fact, the optimal dimension dependence remains unclear even for sums of
independent random vectors; we refer to the discussion following Theorem~1 of
\citet{zhang2026wasserstein} for further details.

\medskip
\noindent\textbf{\(L^{(2+\eta)p}\)-moment condition.}
Theorem~\ref{thm:main} assumes \(h\in L^q(\pi;\R^d)\), with
\(q=(2+\eta)p\), to guarantee the existence of an intermediate exponent $s\in (p,\frac{q}{2}).$ The gap \(s>p\) allows H\"older's inequality to convert mixing into \(L^p\)-decay; see \eqref{eq:beta-projection}. Meanwhile, the condition \(2s<q\) provides the additional integrability needed to control products of coupled increments in \(L^s\) while preserving the decay from the mixing; see \eqref{eq:suffix-basic-moments}.  Thus, the current argument requires \(q>2p\). We adopt the multiplicative slack
\(q=(2+\eta)p\) so that the exponent differences
\(1/p-1/s\) and \(1/(2s)-1/q\) are both of order \(1/p\), which is important
for retaining a sharp dependence on \(p\). In contrast, an additive
slack of the form \(q=2p+\eta\) would make these exponent differences of
order \(1/p^2\), leading to a worse dependence on \(p\) in our current
analysis. If one does not seek explicit control of the Wasserstein-order
dependence, however, the same argument continues to hold under the weaker
\(L^{2p+\eta}\)-moment condition. This modification affects only the
dependence on \(p\) and does not change the resulting dependence on  \(n\) or  \(d\). Further details
are given in Section~\ref{sec:completion}. It remains open whether comparable bounds hold
under weaker moment conditions.

\medskip
\noindent\textbf{Proof roadmap.}
Section~\ref{sec:antisymmetric} formulates the relative-score bound used for
\(\W_p\) Gaussian approximation in terms of antisymmetric Stein couplings
and conditional tensors. Section~\ref{sec:coupling} then introduces the refresh-then-maximal coupling
tailored to Markov-chain-induced martingale sums.
Finally, Section~\ref{sec:completion} develops the required conditional-tensor
estimates and combines these ingredients to prove
Theorem~\ref{thm:main}.

\subsection{Implication for Additive Functionals of Markov Chains}
\label{sec:additive-functional-consequence}

Although Theorem~\ref{thm:main} is formulated for the martingale differences
generated by the Poisson equation, it also directly yields a Gaussian approximation
bound for the original additive functional of Markov chain. Define
\[
    \mathsf S_n
    :=
    \sum_{i=1}^n h(X_i).
\]
The existence of the asymptotic covariance of \(\mathsf S_n\) is established
in the literature; see, e.g., \citet[Lemma EC.2]{zhang2026wasserstein}. For simplicity,
we work under the normalization 
\begin{equation}
\label{eq:additive-asymptotic-covariance}
    \lim_{n\to\infty}
    \frac{1}{n}
    \operatorname{Cov}(\mathsf S_n)
    =
    I_d.
\end{equation}
The Poisson decomposition expresses \(\mathsf S_n\) as the martingale sum
covered by Theorem~\ref{thm:main}, together with a telescoping boundary term
that can be controlled separately. This yields the following corollary, whose
proof is deferred to Section~\ref{sec:proof-additive-functional}.
\begin{corollary}[Gaussian Approximation for Additive Functionals of Markov Chains]
\label{cor:additive-functional}
Under the setup of Section~\ref{sec:problem-setup}, Assumption~\ref{assumption:MC},
and the normalization \eqref{eq:additive-asymptotic-covariance},  for every \(n\geq1\),
\begin{equation*}
\W_p\!\left(
    \law\!\left(
        \mathsf S_n/\sqrt n
    \right),
    \mathcal N(0,I_d)
\right)
\leq
\frac{
    C(\eta,C_0,\rho_0)
}{\sqrt n}
\left[
    \norm{h}_{L^q(\pi)}
    +
    p^3\norm{h}_{L^q(\pi)}^2
    +
    p\,d^{1/4}
    \norm{h}_{L^q(\pi)}^{5/2}
\right].
\end{equation*}
\end{corollary}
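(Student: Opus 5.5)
The plan is to derive the corollary from Theorem~\ref{thm:main} through the Poisson decomposition, with one extra step for the covariance normalization. Note that $\mathsf S_n=\sum_{i=1}^n h(X_i)$ is indexed from $1$. Applying \eqref{eq:poisson-martingale-decomposition} with indices shifted by one, or equivalently using $g-Pg=h$ at $X_1,\dots,X_n$, gives
\[
\mathsf S_n=\sum_{i=2}^{n+1} D_i + g(X_1)-g(X_{n+1}).
\]
By stationarity this has the same law as $\sum_{i=1}^n D_i + g(X_0)-g(X_n)$. Let $\Sigma_n:=\operatorname{Cov}(\sum_{i=1}^n D_i)$. Stationarity and orthogonality of martingale increments give $\Sigma_n=n\Sigma$ with $\Sigma:=\operatorname{Cov}(D_1)=\pi(gg^\top)-\pi(Pg\,Pg^\top)$. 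The boundary term is bounded in $L^2$, and $\operatorname{Cov}(\mathsf S_n)=n\Sigma+O(\sqrt n)$, so \eqref{eq:additive-asymptotic-covariance} forces $\Sigma=I_d$. Hence the normalized martingale $S_n:=n^{-1/2}\sum_{i=1}^n D_i$, obtained with $M_i=n^{-1/2}I_d$, has $\operatorname{Cov}(S_n)=I_d$ exactly. This is the setting of Theorem~\ref{thm:main}, and no covariance-mismatch term is needed.

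Next I bound the increment sizes. We have $a_i=n^{-1/2}\norm{D_i}_{L^q}\le n^{-1/2}\bigl(\norm{g}_{L^q(\pi)}+\norm{Pg}_{L^q(\pi)}\bigr)\le 2n^{-1/2}\norm{g}_{L^q(\pi)}$, using stationarity and the $L^q(\pi)$-contractivity of $P$ (Jensen). Summing the bound \eqref{eq:Lr-uniform-decay} from Lemma~\ref{lem:Lr-Poisson} over $m$, with $\pi(h)=0$, gives
\[
\norm{g}_{L^q(\pi)}\le \sum_{m\ge0}2(C_0\rho_0^m)^{1-1/q}\norm{h}_{L^q(\pi)}\le C(\eta,C_0,\rho_0)\norm{h}_{L^q(\pi)}.
\]
This constant is uniform in $p$ because $1-1/q\ge 1/2$. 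Therefore $\norm{A}_2\le C\norm{h}_{L^q}$ and $\norm{A}_4^2\le C n^{-1/2}\norm{h}_{L^q}^2$. Substituting into \eqref{eq:main-bound} yields
\[
\W_p\bigl(\law(S_n),\mathcal N(0,I_d)\bigr)\le C n^{-1/2}\bigl(p^3\norm{h}^2_{L^q}+p\,d^{1/4}\norm{h}_{L^q}^{5/2}\bigr).
\]

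Finally, I couple the two laws directly, which avoids any triangle-inequality complications. On a single probability space, $\mathsf S_n/\sqrt n$ and $S_n+n^{-1/2}(g(X_0)-g(X_n))$ have the same law. Take an optimal coupling of $S_n$ with $Z\sim\mathcal N(0,I_d)$ and glue it to the chain's joint law (gluing lemma). Minkowski's inequality in $L^p$ then gives
\[
\W_p\bigl(\law(\mathsf S_n/\sqrt n),\mathcal N(0,I_d)\bigr)\le \W_p\bigl(\law(S_n),\mathcal N(0,I_d)\bigr)+n^{-1/2}\norm{g(X_0)-g(X_n)}_{L^p}.
\]
The last term is at most $2n^{-1/2}\norm{g}_{L^q(\pi)}\le Cn^{-1/2}\norm{h}_{L^q(\pi)}$ since $p\le q$, which produces the linear term in the bracket.

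The main obstacle is the covariance identification $\Sigma=I_d$. I will verify it by expanding $\operatorname{Cov}(\mathsf S_n)=\operatorname{Cov}(\sum D_i)+2\operatorname{Cov}(\sum D_i,B_n)+\operatorname{Cov}(B_n)$, where $B_n$ is the boundary term. Cauchy--Schwarz bounds the cross term by $O(\sqrt n)$ and the last term by $O(1)$. Dividing by $n$ and letting $n\to\infty$ gives $\Sigma=I_d$. Everything else is routine bookkeeping of constants depending only on $(\eta,C_0,\rho_0)$.
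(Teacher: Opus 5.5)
Your proposal is correct and follows the paper's proof essentially step for step: Poisson decomposition, identification of $\E[D_1D_1^\top]=I_d$ from the asymptotic-covariance normalization via Cauchy--Schwarz on the boundary term, Theorem~\ref{thm:main} applied with $M_i=n^{-1/2}I_d$, and a triangle-inequality (coupling) bound that yields the linear term. The only difference is cosmetic: you use the boundary term $g(X_1)-g(X_{n+1})$ plus a stationarity shift, whereas the paper writes $h(X_i)=D_i+Pg(X_{i-1})-Pg(X_i)$ and so obtains $\mathsf S_n=\sum_{i=1}^n D_i+Pg(X_0)-Pg(X_n)$ directly, with no shift needed.
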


Prior work on \(\W_p\) (\(p\geq2\) Gaussian approximation for sums of multivariate
additive functionals of Markov chains achieves only an
\(\mathcal{O}(n^{-1/4})\) rate, under weaker geometric-ergodicity conditions
\citep{zhang2026wasserstein}. In comparison, for fixed \(p\) and \(d\), our
result establishes the first optimal \({O}(n^{-1/2})\) rate under an
\(L^{(2+\eta)p}\)-moment condition.

\section{An Ornstein--Uhlenbeck Bound via Antisymmetric
Stein Couplings}
\label{sec:OU}
In this section, we build on the Ornstein--Uhlenbeck relative-score approach
of \citet{fang2023p} for Gaussian approximation in \(\W_p\), \(p\geq2\),
and formulate it in terms of antisymmetric Stein couplings and conditional
tensors.  Let
\(W\in L^p(\R^d)\) satisfy $\E[W]=0$ and $
    \E[WW^\top]=I_d.$ Let \(Z\sim\gamma_d:=\mathcal N(0,I_d)\) be $d$-dimensional standard Gaussian and independent of all other random
variables under consideration. For each $t>0,$ define $\alpha_t:=e^{-t},$ $\beta_t:=\sqrt{1-e^{-2t}},$ $\delta_t:=\frac{\beta_t}{\alpha_t}
    =\sqrt{e^{2t}-1},$ and 
\begin{equation}
\label{eq:OU}
W_t:=\alpha_tW+\beta_tZ.
\end{equation}
As shown in \cite[Section~6.1]{fang2023p}, \(\law(W_t)\) admits a smooth,
strictly positive density \(f_t\) with respect to \(\mathcal N(0,I_d)\).
Accordingly, the relative score of \(\law(W_t)\) with respect to
\(\mathcal N(0,I_d)\) is defined by
\[
    \mathfrak{s}_t(x)
    :=
    \nabla \log f_t(x).
\]
The following standard transport estimate converts control of the relative
score along the Ornstein--Uhlenbeck (OU) flow \eqref{eq:OU} into a bound on the Gaussian
approximation error in \(\W_p\). The
result is established in
\citet[Eq.~(6.13) and Section~8.2]{fang2023p}.
\begin{lemma}[OU Transport Bound]
\label{lem:OU-transport}
Let \(p\geq2\), and let \(W\) be an \(\R^d\)-valued random vector satisfying $\E\!\left[\norm{W}^p\right]<\infty.$ Then
\begin{equation}
\label{eq:OU-transport}
    \W_p\!\left(
        \law(W),
        \mathcal N(0,I_d)
    \right)
    \leq
    \int_0^\infty
        \norm{\mathfrak{s}_t(W_t)}_{L^p}
    \mathrm dt,
\end{equation}
where the right-hand side is understood as an improper integral with values
in \([0,\infty]\).
\end{lemma}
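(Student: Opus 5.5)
The plan is to realize the Ornstein--Uhlenbeck semigroup as a transport map along the curve \(\mu_t:=\law(W_t)\) and to integrate its Lagrangian velocity. First, using the smooth positive density \(f_t\) of \(\mu_t\) with respect to \(\gamma_d\), the Fokker--Planck equation for the OU flow \eqref{eq:OU} has the continuity-equation form \(\partial_t(f_t\gamma_d)+\nabla\cdot(f_t\gamma_d\,v_t)=0\) with velocity field \(v_t=-\nabla\log f_t=-\mathfrak s_t\). Indeed, the generator is \(L=\Delta-x\cdot\nabla\), and \(\partial_t f_t=Lf_t\) (as a density relative to \(\gamma_d\)) gives \(\partial_t(f_t\gamma_d)=\nabla\cdot(\gamma_d\nabla f_t)=\nabla\cdot(f_t\gamma_d\nabla\log f_t)\).

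Second, I would apply the Benamou--Brenier type estimate (equivalently, Ambrosio--Gigli--Savaré, Theorem 8.3.1): for \(0<s<T\),
\[
\W_p(\mu_s,\mu_T)\le\int_s^T\norm{v_t}_{L^p(\mu_t)}\,\mathrm dt=\int_s^T\norm{\mathfrak s_t(W_t)}_{L^p}\,\mathrm dt .
\]
This can be done concretely by solving the flow ODE \(\dot x=v_t(x)\), which is well posed because \(f_t\) is smooth and positive for \(t>0\). The flow pushes \(\mu_s\) forward to \(\mu_t\), and Minkowski's inequality gives \(\E\norm{X_T-X_s}^p\le(\int_s^T\norm{v_t(X_t)}_{L^p}\mathrm dt)^p\). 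Local integrability of \(v_t\) in \(L^p(\mu_t)\) on \([s,T]\) follows from Gaussian smoothing: the score is a conditional expectation of \(Z\)-type quantities, so it is bounded in \(L^p\) by \(C(t)\).

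Third, I would pass to the limits. As \(s\downarrow0\), \(W_s\to W\) in \(L^p\) because \(\norm{W_s-W}_{L^p}\le(1-\alpha_s)\norm{W}_{L^p}+\beta_s\norm{Z}_{L^p}\to0\), using \(\E\norm W^p<\infty\). As \(T\to\infty\), \(\W_p(\mu_T,\gamma_d)\le\alpha_T\norm{W-Z'}_{L^p}\to0\) under the synchronous coupling. By the triangle inequality and monotone convergence of the nonnegative integrand, we get \eqref{eq:OU-transport}, with the right-hand side possibly infinite.

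The main obstacle is justifying the continuity-equation/flow step rigorously near \(t=0\), where \(f_t\) may degenerate. Restricting to \([s,T]\) with \(s>0\) and then taking limits handles this, which is exactly the route taken in \citet[Section~8.2]{fang2023p}, so the detailed regularity can be cited there.
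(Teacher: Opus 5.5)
Your proposal is correct and follows the same route as the source the paper relies on; the paper gives no proof of its own and defers to \citet[Eq.~(6.13) and Section~8.2]{fang2023p}. The ingredients match that source: the continuity equation with velocity $-\mathfrak{s}_t$, the metric-derivative bound on $[s,T]$ with local integrability supplied by $\mathfrak{s}_t(W_t)=\alpha_t\E[W-\lambda_t Z\mid W_t]$, and the two endpoint limits via the synchronous couplings. One point to tighten: smoothness and positivity of $f_t$ give only local solvability of the characteristic ODE, and global non-explosion for $\mu_s$-a.e.\ starting point needs an extra argument. It is therefore cleaner to invoke Ambrosio--Gigli--Savar\'e, Theorem~8.3.1, directly, which needs only the distributional continuity equation and $\int_s^T\norm{v_t}_{L^p(\mu_t)}\,\mathrm dt<\infty$.
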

It therefore suffices to control \(\norm{\mathfrak{s}_t(W_t)}_{L^p}\). Section~\ref{sec:antisymmetric} establishes the relative-score bound via
antisymmetric Stein couplings, and Section~\ref{sec:proof-OU-Stein}
provides a self-contained proof.

\subsection{A Score Bound via Antisymmetric Stein Couplings} \label{sec:antisymmetric}

Stein couplings introduced by \citet{chen2010stein} provide a unifying
formulation for coupling-based Gaussian approximation. Their multivariate
extension was developed by \citet{fang2015rates}. Specifically, a triple of \(d\)-dimensional random vectors \((W,W',G)\) is a
multivariate Stein coupling if it satisfies
\begin{equation}
\label{eq:Stein-identity}
\E\!\left[
    G\cdot
    \bigl\{
        \varphi(W')-\varphi(W)
    \bigr\}
\right]
=
\E\!\left[
    W\cdot\varphi(W)
\right]
\end{equation}
for every bounded measurable vector field
 \(\varphi\). A key feature of this formulation is that the auxiliary variable \(G\)
is separated from the coupling increment
\(\Delta:=W'-W\) and, in particular, need not be proportional to it
\citep{chen2010stein}. 

We use the term \emph{antisymmetric Stein coupling} for a
multivariate Stein coupling that additionally satisfies
\begin{equation}
\label{eq:abstract-Stein-assumptions}
    (W,W',G)
    \stackrel{\mathrm d}{=}
    (W',W,-G).
\end{equation}
We will employ the above antisymetric Stein coupling to bound the OU relative score, as 
stated in the following Proposition \ref{prop:OU-Stein}.

\begin{proposition}[Truncated Antisymmetric Score Bound]
\label{prop:OU-Stein}
Let \(p\geq2\), and let \((W,W',G)\) be an antisymmetric Stein coupling such that
\[
    W,W',G\in L^p(\R^d),
    \qquad
    \E[W]=0,
    \qquad
    \E[WW^\top]=I_d.
\]
Fix \(t>0\). Let \(\mathsf A_t\subseteq(\mathbb R^d)^3\) be a measurable set
satisfying
\begin{equation}\label{eq:At_invariance}
    (w,w',g)\in\mathsf A_t
    \quad\Longleftrightarrow\quad
    (w',w,-g)\in\mathsf A_t,\qquad \forall w,w',g\in\mathbb R^d 
\end{equation}
and define the event $\mathcal A_t
    :=
    \{(W,W',G)\in\mathsf A_t\} $. Assume further that, on the event \(\mathcal A_t\),
\begin{equation}
\label{eq:small-increment}
\frac{\sqrt{p-1}}{\delta_t}\norm{\Delta}
\leq
\frac14.
\end{equation}
Define the \(W\)-measurable random tensors
\begin{align*}
R_t
&:=
2\E\!\left[
    G\one_{\mathcal A_t^c}
    \,\middle|\,
    W
\right],
\\
T_{k,t}
&:=
\E\!\left[
    G\ot\Delta^{\ot k}\one_{\mathcal A_t}
    \,\middle|\,
    W
\right],
\qquad k\geq1.
\end{align*}
Then
\begin{align}
\label{eq:score-bound-general}
\norm{\mathfrak{s}_t(W_t)}_{L^p}
\leq
e^{-t}\Bigg[\norm{R_t}_{L^p}
+
\frac{\sqrt{p-1}}{\delta_t}
\norm{T_{1,t}-I_d}_{L^p(\HS)}+
\left\{
\sum_{k=2}^{\infty}
\frac{(p-1)^k}{k!\,\delta_t^{2k}}
\norm{T_{k,t}}_{L^p(\HS)}^2
\right\}^{1/2}
\Bigg].
\end{align}
\end{proposition}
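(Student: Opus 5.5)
Our plan follows the relative-score computation of \citet[Section~7]{fang2023p}, with the exchangeable-pair identities replaced by the Stein identity \eqref{eq:Stein-identity} and the antisymmetry \eqref{eq:abstract-Stein-assumptions}.

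\textbf{Step 1: score representation.} The score of the OU marginal has the representation
\[
\mathfrak s_t(W_t)=\E\!\left[\alpha_t W-\frac{\alpha_t}{\beta_t}\,\text{(something in } Z)\,\middle|\,W_t\right],
\]
and more usefully, for any smooth bounded test field $\varphi$,
\[
\E[\mathfrak s_t(W_t)\cdot\varphi(W_t)]=\E\!\left[\alpha_t W\cdot\varphi(W_t)-\alpha_t^2\,\nabla\cdot\varphi(W_t)\right].
\]
This follows by Gaussian integration by parts in $Z$, using $\alpha_t^2+\beta_t^2=1$. By duality it suffices to bound this expression over $\varphi$ with $\norm{\varphi(W_t)}_{L^{p'}}\le1$, $p'=p/(p-1)$; one may take $\varphi=\mathfrak s_t|\mathfrak s_t|^{p-2}$ normalized.

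\textbf{Step 2: apply the Stein identity.} Put $\psi(w):=\E_Z\varphi(\alpha_t w+\beta_t Z)$. Then $\E[W\cdot\varphi(W_t)]=\E[W\cdot\psi(W)]=\E[G\cdot(\psi(W')-\psi(W))]$. Split according to $\one_{\mathcal A_t}$ and $\one_{\mathcal A_t^c}$.

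\emph{Off the event.} Antisymmetry together with the invariance \eqref{eq:At_invariance} gives
\[
\E[G\cdot\psi(W')\one_{\mathcal A_t^c}]=-\E[G\cdot\psi(W)\one_{\mathcal A_t^c}].
\]
Hence the $\mathcal A_t^c$ part equals $-2\E[G\one_{\mathcal A_t^c}\cdot\psi(W)]=-\E[R_t\cdot\psi(W)]$. By Hölder and Jensen ($\norm{\psi(W)}_{L^{p'}}\le\norm{\varphi(W_t)}_{L^{p'}}$) this is bounded by $\norm{R_t}_{L^p}$; the factor $\alpha_t=e^{-t}$ comes out in front.

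\emph{On the event.} Taylor-expand $\psi(W+\Delta)-\psi(W)=\sum_{k\ge1}\frac1{k!}\nabla^k\psi(W)\contr\Delta^{\ot k}$. Gaussian smoothing lets us move derivatives onto Hermite polynomials:
\[
\nabla^k\psi(w)=\beta_t^{-k}\alpha_t^{k}\,\E_Z\!\left[\varphi(\alpha_tw+\beta_tZ)\ot H_k(Z)\right].
\]
This gives the factor $\delta_t^{-k}$. Conditioning on $W$ produces $\E[\varphi(W_t)\ot H_k(Z)\contr T_{k,t}]$. The $k=1$ term, $\delta_t^{-1}\E[\varphi\ot Z\contr T_{1,t}]$, combines with $-\alpha_t^2\E\nabla\cdot\varphi(W_t)=-\alpha_t\delta_t^{-1}\E[\varphi(W_t)\cdot Z]$ (integration by parts) to give $T_{1,t}-I_d$. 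Convergence of the Taylor series is justified by the smallness condition \eqref{eq:small-increment}.

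\textbf{Step 3: estimate the Hermite terms.} We need
\[
\abs{\E[\varphi(W_t)\ot H_k(Z)\contr T_k]}\le\norm{\varphi(W_t)}_{L^{p'}}\,\norm{\E_{Z}[H_k(Z)\contr T_k\mid\cdots]}_{L^p}.
\]
By Gaussian hypercontractivity, $\norm{\langle H_k(Z),T\rangle}_{L^p_Z}\le(p-1)^{k/2}\sqrt{k!}\,\norm{T}_{\HS}$. This accounts for the $\sqrt{p-1}/\delta_t$ factor at $k=1$.

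For $k\ge2$, summing termwise would give $\sum_k\frac{(p-1)^{k/2}}{\sqrt{k!}\,\delta_t^k}\norm{T_k}$, which is not the square-root form. To obtain the $\ell^2$ form we will instead apply hypercontractivity to the whole chaos sum $\sum_{k\ge2}\frac{\delta_t^{-k}}{k!}\langle H_k(Z),T_k\rangle$. Its Gaussian $L^p$ norm is at most $\bigl(\sum_k (p-1)^k\,\mathrm{Var}\text{-part}\bigr)^{1/2}$ via the hypercontractive operator $P_{\log\sqrt{p-1}}$ and orthogonality of chaoses. This gives
\[
\Bigl(\sum_k\frac{(p-1)^k}{k!\,\delta_t^{2k}}\norm{T_k}_{\HS}^2\Bigr)^{1/2}
\]
pointwise in $W$. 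We then take the $L^p$ norm in $W$ and use Minkowski in $L^{p/2}$ to move the sum outside.

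\textbf{Main obstacle.} The delicate step is justifying the interchange of the Taylor/chaos expansion with the expectations, and summability, with only $L^p$ moments. Condition \eqref{eq:small-increment} gives geometric decay $(\sqrt{p-1}\norm\Delta/\delta_t)^k\le4^{-k}$ on $\mathcal A_t$. We would first prove the identity for bounded truncations and then pass to limits by dominated convergence.
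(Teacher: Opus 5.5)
Your proposal is correct and follows essentially the paper's own argument. The shared steps are: the truncated antisymmetric Stein identity producing $R_t$; Gaussian smoothing, which turns the increment into Hermite chaoses weighted by $\delta_t^{-k}$; cancellation of the $k=1$ term against the $-\delta_t^{-1}Z$ part of the score to give $T_{1,t}-I_d$; and hypercontractivity with the Hermite isometry, followed by Minkowski in $L^{p/2}$.

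The differences are cosmetic. Where you use duality against test fields and a Taylor expansion of $\psi$, the paper works with $\mathfrak{s}_t(W_t)=\alpha_t\E[W-\delta_t^{-1}Z\mid W_t]$, the Gaussian-shift likelihood ratio $L_t$ and conditional Jensen; this is the same computation. The paper also applies hypercontractivity to the full $k\geq1$ chaos sum and only then separates the first term, whereas you split it off beforehand.
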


\noindent\textbf{Comparison with prior analysis.}
Our analysis builds on the Ornstein-Uhlenbeck relative-score approach of \citet{fang2023p}, leveraging antisymmetric Stein coupling.
In contrast, \citet[Theorem~7.1]{fang2023p} derive a related relative-score bound using
generalized exchangeable pairs, where the coupling is specified through the
conditional regression condition in \citet[Eq.~(7.1)]{fang2023p} rather than
through the Stein identity \eqref{eq:Stein-identity}. Although the two
formulations are closely related, the final bound in
\citet[Theorem~7.1]{fang2023p} reduces the higher-order conditional tensors
to conditional absolute moments. In particular, it involves a conditional
cubic quantity of the form
\[
    \norm{
        \E\!\left[
            \norm{G}\norm{\Delta}^3
            \,\middle|\,
            X
        \right]
    }_{L^p},
\]
where \(X\) denotes the underlying randomness with respect to which
\(W\) is measurable. If \(G\) and \(\Delta\) are controlled only through a common marginal
\(L^q\)-moment bound, then conditional Jensen's inequality and H\"older's
inequality give
\[
    \norm{
        \E\!\left[
            \norm{G}\norm{\Delta}^3
            \,\middle|\,
            X
        \right]
    }_{L^p}
    \leq
    \norm{\norm{G}\norm{\Delta}^3}_{L^p}
    \leq
    \norm{G}_{L^{4p}}
    \norm{\Delta}_{L^{4p}}^3.
\]
Thus, a direct application of the final bound naturally requires control of
moments of order at least \(4p\). In \citet{fang2023p}, the relative-score
bound is applied to obtain Gaussian approximation results under
sub-exponential tail assumptions, which in particular guarantee the
existence of moments of all orders.

For our setting, retaining the conditional tensor structure allows positive
and negative tensor contributions from different perturbations to cancel
before norms are taken, rather than bounding each contribution separately by
its magnitude. This additional cancellation is crucial for obtaining sharp
bounds under Markovian dependence and leads to moment requirements only
slightly stronger than order \(2p\).

\noindent\textbf{A moment identity implied by the Stein coupling.}
 Since \(p\geq2\), we have \(W,G,\Delta\in L^2\).
For every \(B\in\R^{d\times d}\), apply
\eqref{eq:Stein-identity} to $\varphi_R(w)
:=
B\operatorname{proj}_R(w),$ where \(\operatorname{proj}_R\) denotes Euclidean projection onto the closed
ball of radius \(R\). By nonexpansiveness of the projection,
\[
\abs{W\cdot\varphi_R(W)}
\leq
\norm{B}_{\mathrm{op}}\norm{W}^2,
\qquad
\abs{
G\cdot
\bigl\{
    \varphi_R(W')-\varphi_R(W)
\bigr\}
}
\leq
\norm{B}_{\mathrm{op}}\norm{G}\norm{\Delta}.
\]
Dominated convergence therefore yields
\[
\E[W^\top BW]
=
\E[G^\top B\Delta].
\]
Since this identity holds for every \(B\in\R^{d\times d}\), we have $\E[\Delta G^\top]
=
\E[WW^\top]
=
I_d.$ Taking transposes gives
\begin{equation}
\label{eq:Stein-covariance-calibration}
\E[G\ot\Delta]
=
\E[G\Delta^\top]
=
I_d.
\end{equation}
Thus, the Stein identity forces the mean of the tensor
\(G\ot\Delta\) to match the covariance matrix of \(W\), which is \(I_d\)
under our normalization. This identity will be used below to control
\(T_{1,t}-I_d\).

\subsection{Proof of Proposition~\ref{prop:OU-Stein}}
\label{sec:proof-OU-Stein}

The proof consists of three steps. First, we derive a truncated version of the Stein identity and combine it with the Gaussian shift formula to obtain a representation of the relative score \(\mathfrak{s}_t\); see \eqref{eq:truncated-Stein} and \eqref{eq:score-representation}. Second, we use the small-increment condition on
$\mathcal A_t$ to justify a  Hermite expansion  in $L^p$ for the key term in \(\mathfrak{s}_t\); see \eqref{eq:conditional-Hermite}. 
Finally,
we convert this expansion into the desired bound on the score.

\medskip
\noindent\textbf{Score
representation.}
By the invariance of \(\mathcal A_t\) and the antisymmetry \eqref{eq:abstract-Stein-assumptions}, \begin{align*} \E\!\left[ G\one_{\mathcal A_t^c}\cdot\varphi(W') \right] &= -\E\!\left[ G\one_{\mathcal A_t^c}\cdot\varphi(W) \right] \end{align*} for every bounded measurable vector field \(\varphi:\R^d\to\R^d\). Hence \begin{align*} \E\!\left[ G\one_{\mathcal A_t^c} \cdot \bigl\{ \varphi(W')-\varphi(W) \bigr\} \right] = -2\E\!\left[ G\one_{\mathcal A_t^c}\cdot\varphi(W) \right] = -\E\!\left[ R_t\cdot\varphi(W) \right], \end{align*} where the last equality follows from the definition $ R_t = 2\E\!\left[ G\one_{\mathcal A_t^c} \,\middle|\, W \right].$ Decomposing the Stein identity \eqref{eq:Stein-identity} over \(\mathcal A_t\) and \(\mathcal A_t^c\) therefore yields \begin{equation} \label{eq:truncated-Stein} \E\!\left[ (W+R_t)\cdot\varphi(W) \right] = \E\!\left[ G\one_{\mathcal A_t} \cdot \bigl\{ \varphi(W')-\varphi(W) \bigr\} \right]. \end{equation}
Set $\lambda_t:=\delta_t^{-1}.$ The standard relative-score representation for the OU flow
\citep[Lemma~5]{bonis2020stein} gives
\begin{equation}
\label{eq:relative-score}
    \mathfrak{s}_t(W_t)
    =
    \alpha_t
    \E\!\left[
        W-\lambda_t Z
        \,\middle|\,
        W_t
    \right].
\end{equation}
Let $\varphi:\R^d\to\R^d$ be bounded and measurable, and define $\psi(w)
    :=
    \E_Z\!\left[
        \varphi(\alpha_t w+\beta_t Z)
    \right].$ Since $W'=W+\Delta$ and $\lambda_t=\alpha_t/\beta_t$, the Gaussian shift
formula yields
\[
    \psi(W')
    =
    \E_Z\!\left[
        \varphi(W_t)L_t
    \right],
    \qquad
    L_t
    :=
    \exp\left\{
        \lambda_t\ip{\Delta}{Z}
        -\frac{\lambda_t^2}{2}\norm{\Delta}^2
    \right\},
\]
whereas $\psi(W)=\E_Z[\varphi(W_t)].$ Substituting these identities into 
\eqref{eq:truncated-Stein} gives
\begin{equation}
\label{eq:shifted-Stein-identity}
    \E\!\left[
        \left\{
            W+R_t
            -
            G\one_{\mathcal A_t}(L_t-1)
        \right\}
        \cdot\varphi(W_t)
    \right]
    =0.
\end{equation}
Since 
$\E_Z[L_t]=1$,
\begin{align*} \E\!\left[ \norm{G}\one_{\mathcal A_t}|L_t-1| \right] \leq \E\!\left[ \norm{G}\one_{\mathcal A_t}(L_t+1) \right]= 2\E\!\left[ \norm{G}\one_{\mathcal A_t} \right] \leq 2\E[\norm{G}] <\infty. \end{align*}
Then, the random vector appearing inside the braces in
\eqref{eq:shifted-Stein-identity} is integrable. Since \eqref{eq:shifted-Stein-identity} holds for every bounded measurable
$\varphi$, it follows that
\begin{equation}
\label{eq:conditional-zero}
    \E\!\left[
        W+R_t
        -
        G\one_{\mathcal A_t}(L_t-1)
        \,\middle|\,
        W_t
    \right]
    =0.
\end{equation}
Combining \eqref{eq:relative-score} and \eqref{eq:conditional-zero}, we obtain
\begin{equation}
\label{eq:score-representation}
    \mathfrak{s}_t(W_t)
    =
    \alpha_t
    \E\!\left[
        -R_t-\lambda_t Z
        +
        G\one_{\mathcal A_t}(L_t-1)
        \,\middle|\,
        W_t
    \right].
\end{equation}
Since $W_t$ is measurable with respect to \(\sigma(W,Z)\), we proceed in two steps. We first show that $\E\left[
G\one_{\mathcal A_t}(L_t-1)
\middle|
W,Z
\right]$ admits an \(L_p\)-convergent Hermite expansion. We then apply the tower property together with conditional Jensen's inequality to bound the \(L_p\)-norm of the relative score.

\medskip
\noindent\textbf{Hermite expansion.}
Let \(H_k(z)\) denote the \(k\)th multivariate probabilists' Hermite tensor,
characterized by the generating identity
\begin{equation}
\label{eq:Hermite-generating}
    \exp\left\{
        \ip{u}{z}-\frac12\norm{u}^2
    \right\}
    =
    \sum_{k=0}^{\infty}
        \frac{1}{k!}
        u^{\ot k}\contr H_k(z),
    \qquad \forall u,z\in\R^d.
\end{equation}
We claim that
\begin{equation}
\label{eq:conditional-Hermite}
    \E\!\left[
        G\one_{\mathcal A_t}(L_t-1)
        \,\middle|\,
        W,Z
    \right]
    =
    \sum_{k=1}^{\infty}
        \frac{\lambda_t^k}{k!}
        T_{k,t}\contr H_k(Z)
    \qquad\text{in }L^p. 
\end{equation}

\begin{proof}[Proof of the Claim.]
We use the standard Hermite isometry
\begin{equation}
\label{eq:Hermite-isometry}
    \E_Z\!\left[
        \norm{B\contr H_k(Z)}^2
    \right]
    =
    k!\norm{B}_{\HS}^2,
    \qquad
    \forall B\in\R^d\ot\operatorname{Sym}^k(\R^d),
\end{equation} 
and the Gaussian hypercontractivity inequality Lemma 6 in 
\cite{bonis2020stein}: for every finite collection \(\{U_k(Z)\}_{k=0}^N\) of homogeneous Gaussian chaoses, where \(U_k\) has degree \(k\), it holds that
\begin{equation} \label{eq:Gaussian-hypercontractivity} \norm{\sum_{k=0}^N U_k(Z)}_{L_Z^p} \leq \left\{ \sum_{k=0}^N (p-1)^k \norm{U_k(Z)}_{L_Z^2}^2 \right\}^{1/2}. 
\end{equation} 
To prove the claim, define the partial sums
\[
    Q_N
    :=
    \one_{\mathcal A_t}
    \sum_{k=1}^N
        \frac{\lambda_t^k}{k!}
        \bigl(G\ot\Delta^{\ot k}\bigr)\contr H_k(Z).
\]
For $N>M$, applying \eqref{eq:Hermite-isometry} and \eqref{eq:Gaussian-hypercontractivity} gives
\begin{equation}
\label{eq:Hermite-tail}
    \norm{Q_N-Q_M}_{L_Z^p}^2
    \leq
    \norm{G}^2\one_{\mathcal A_t}
    \sum_{k=M+1}^N
        \frac{r_t^k}{k!},
    \qquad
    r_t:=(p-1)\lambda_t^2\norm{\Delta}^2.
\end{equation}
By \eqref{eq:small-increment}, $r_t\leq\frac1{16}$ on $\mathcal A_t.$ Since $G\in L^p$, \eqref{eq:Hermite-tail}  show
that $\{Q_N\}_{N \geq 0}$ is Cauchy in $L^p$. On the other hand, the identity
\eqref{eq:Hermite-generating}, applied with $u=\lambda_t\Delta$, gives 
\[
    Q_N
    \xlongrightarrow{N\rightarrow \infty}
    G\one_{\mathcal A_t}(L_t-1)\qquad\text{a.s.}
\]
Consequently,
\begin{equation}
\label{eq:Lt-Hermite-Lp}
    G\one_{\mathcal A_t}(L_t-1)
    =
    \sum_{k=1}^{\infty}
        \frac{\lambda_t^k}{k!}
        \bigl(G\ot\Delta^{\ot k}\bigr)\contr H_k(Z)
        \one_{\mathcal A_t}
    \qquad\text{in }L^p.
\end{equation}
Taking expectations condition on $(W,Z)$ and using the
independence of $Z$ from $(W,W',G)$  proves \eqref{eq:conditional-Hermite} of the claim. 
\end{proof}

For later use, note also that conditional Jensen's inequality and the
small-increment condition \eqref{eq:small-increment} imply
\begin{equation}
\label{eq:Tk-summability}
    \lambda_t^k
    \norm{T_{k,t}}_{L^p(\HS)}
    \leq
    \frac{1}{4^k(p-1)^{k/2}}
    \norm{G}_{L^p},
    \qquad k\geq1.
\end{equation}
In particular,
\[
    \sum_{k=1}^{\infty}
        \frac{(p-1)^k\lambda_t^{2k}}{k!}
        \norm{T_{k,t}}_{L^p(\HS)}^2
    <\infty.
\]

\medskip
\noindent\textbf{Control of the score.}
Note that $W_t$ is measurable with respect to $\sigma(W,Z)$. The tower property
in \eqref{eq:score-representation}, together with
\eqref{eq:conditional-Hermite} and conditional Jensen's inequality, gives 
\begin{align}
\label{eq:score-before-hypercontractivity}
    \norm{\mathfrak{s}_t(W_t)}_{L^p}
    \leq
    \alpha_t
    \Bigg\|
        -R_t
        +
        \lambda_t(T_{1,t}-I_d)\contr H_1(Z)
        +
        \sum_{k=2}^{\infty}
            \frac{\lambda_t^k}{k!}
            T_{k,t}\contr H_k(Z)
    \Bigg\|_{L^p},
\end{align}
where we used $H_1(Z)=Z$ and $I_d\contr H_1(Z)=Z.$ Conditioning on $W$, applying \eqref{eq:Gaussian-hypercontractivity} to finite
partial sums, and then passing to the infinite series, justified by
\eqref{eq:Tk-summability}, yields
\begin{align*}
    &
    \left\|
        \lambda_t(T_{1,t}-I_d)\contr H_1(Z)
        +
        \sum_{k=2}^{\infty}
            \frac{\lambda_t^k}{k!}
            T_{k,t}\contr H_k(Z)
    \right\|_{L^p}
    \\
    &\qquad\leq
    \sqrt{p-1}\,\lambda_t
    \norm{T_{1,t}-I_d}_{L^p(\HS)}
    +
    \left\{
        \sum_{k=2}^{\infty}
            \frac{(p-1)^k\lambda_t^{2k}}{k!}
            \norm{T_{k,t}}_{L^p(\HS)}^2
    \right\}^{1/2}.
\end{align*} 
Finally, applying the triangle inequality in
\eqref{eq:score-before-hypercontractivity}, and recalling that
$\alpha_t=e^{-t}$ and $\lambda_t=\delta_t^{-1}$, yields
\[
\norm{\mathfrak{s}_t(W_t)}_{L^p}
\leq
e^{-t}\Bigg[
    \norm{R_t}_{L^p}
    +
    \frac{\sqrt{p-1}}{\delta_t}
        \norm{T_{1,t}-I_d}_{L^p(\HS)}
    +
    \left\{
        \sum_{k=2}^{\infty}
        \frac{(p-1)^k}
             {k!\,\delta_t^{2k}}
        \norm{T_{k,t}}_{L^p(\HS)}^2
    \right\}^{1/2}
\Bigg],
\]
which is \eqref{eq:score-bound-general}.

\section{Refresh-Then-Maximal Coupling}
\label{sec:coupling}
In this section, we develop the coupling construction that will be 
employed with the relative score bound in Proposition~\ref{prop:OU-Stein} to prove our main result (Theorem \ref{thm:main}).
In particular, Lemma~\ref{lem:block-coupling} collects the key properties of our
\emph{refresh-then-maximal coupling}. We then describe the construction in detail
and verify that it yields the antisymmetric Stein coupling required by
Proposition~\ref{prop:OU-Stein}. The proof of
Lemma~\ref{lem:block-coupling} is deferred to
Section~\ref{sec:proof-block-coupling}.

Let $\Omega_+:=\mathsf X^{\mathbb N_0}$, and let \(\mathbf P_x\) denote the law on \(\Omega_+\) of a Markov chain with transition kernel \(P\) started from \(x\). We work with a two-sided stationary version \((X_k)_{k\in\mathbb Z}\) of the original chain and define
\[
    \mathcal X:=\sigma(X_k:k\in\mathbb Z),
    \qquad
    \mathcal P_j:=\sigma(X_k:k\leq j),
    \qquad j\in\mathbb Z.
\]

\begin{lemma}[Refresh-Then-Maximal Coupling]
\label{lem:block-coupling}
Suppose that \(\mathsf X\) is a standard Borel space and that \(P\) satisfies
\eqref{eq:UGE}. Then there exist an integer \(L\geq2\) and constants
\(C_{\mathrm c}<\infty\) and \(\rho_{\mathrm c}\in(0,1)\), depending only on
\((C_0,\rho_0)\), such that for each \(i\in\{1,\ldots,n\}\), on an extension of the original probability
space one can construct two suffixes
\[
    \omega^{i,r}
    :=
    (X_{i-1+k}^{i,r})_{k\geq0},
    \qquad r\in\{0,1\},
\]
with the following properties. 

\begin{enumerate}[label=\textup{(\roman*)}]
\item\textbf{Anchoring, conditional exchangeability, and refreshment.}
The first suffix is anchored to the observed chain,
\begin{equation}\label{eq:anchored-first-coordinate}
 X_j^{i,0}=X_j,
    \qquad j\geq i-1.
\end{equation}
Conditionally on \(\mathcal P_{i-1}\), the pair
\((\omega^{i,0},\omega^{i,1})\) is exchangeable and each \(\omega^{i,r}\) is a
\(P\)-chain started from \(X_{i-1}\). Moreover, for every bounded measurable
\(f:\mathsf X\to\mathbb R\),
\begin{equation}\label{eq:anchored-first-step-law}
\E\!\left[
        f(X_i^{i,1})
        \,\middle|\,
        \mathcal X
    \right]
    =
    Pf(X_{i-1})
    \qquad\text{a.s.}
\end{equation}
\item\textbf{Blockwise maximal coupling.}
Let $\mathcal J_m^i
    :=
    \sigma\!\left(
        X_{i-1+k}^{i,r}:
        0\leq k\leq m,\ r\in\{0,1\}
    \right).$ There exists an \((\mathcal J_m^i)_{m\geq0}\)-stopping time \(\kappa_i\) such
that
\begin{equation}
\label{eq:anchored-meet}
    X_{i-1+k}^{i,0}
    =
    X_{i-1+k}^{i,1},
    \qquad k\geq\kappa_i,
\end{equation}
and
\begin{equation}
\label{eq:anchored-tail}
    \Pp\!\left(
        \kappa_i>m
        \,\middle|\,
        \mathcal P_{i-1}
    \right)
    \leq
    C_{\mathrm c}\rho_{\mathrm c}^{\,m},
    \qquad m\geq0,
    \quad\text{a.s.}
\end{equation}
\item \textbf{Preservation of prefix measurability.}
Define
\[
    \mathfrak b(0):=0,
    \qquad
    \mathfrak b(m)
    :=
    1+L\left\lceil\frac{m-1}{L}\right\rceil,
    \qquad m\geq1,
\]
so that \(m\leq\mathfrak b(m)\leq m+L-1\).
Let \(H_i=H_i(\omega^{i,0},\omega^{i,1})\) be a finite-dimensional integrable function taking value in a Hilbert-space. Then $\E[H_i\mid\mathcal X]$ is measurable with respect to
\(\sigma(X_r:r\geq i-1)\). If, in addition, \(H_i\) is
\(\mathcal J_{\mathfrak b(m)}^i\)-measurable for some \(m\geq0\), then $\E[H_i\mid\mathcal X]$ is measurable with respect to $\sigma\!\left(
        X_{i-1},\ldots,X_{i-1+\mathfrak b(m)}
    \right).$
\end{enumerate}
\end{lemma}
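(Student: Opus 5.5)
We will build the two suffixes explicitly from the observed chain and auxiliary independent randomness, and then read off (i)–(iii) from the construction.

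\textbf{Choosing the block length.} By \eqref{eq:UGE}, we pick \(L\geq2\) with \(C_0\rho_0^{L-1}\leq 1/4\). Then for all \(x,y\in\mathsf X\),
\[
\norm{P^{L}(x,\cdot)-P^{L}(y,\cdot)}_{\TV}\leq 2C_0\rho_0^{L-1}\leq\tfrac12 .
\]

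\textbf{Construction.} Fix \(i\). We set \(\omega^{i,0}:=(X_{i-1+k})_{k\geq0}\), which gives the anchoring \eqref{eq:anchored-first-coordinate}. The second suffix is built in stages.

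\emph{Refresh.} Start \(X_{i-1}^{i,1}:=X_{i-1}\). Draw \(X_i^{i,1}\sim P(X_{i-1},\cdot)\) using fresh randomness independent of \(\mathcal X\). This gives \eqref{eq:anchored-first-step-law} directly.

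\emph{Blockwise maximal coupling.} Consider the blocks of times \(\{1+jL,\dots,1+(j+1)L\}\), \(j\geq0\); their endpoints are exactly the values \(\mathfrak b(m)\). At the start of each block, given the current states \(x=X^{i,0}_{i+jL}\) and \(y=X^{i,1}_{i+jL}\):
\begin{itemize}[leftmargin=12pt]
\item If \(x=y\), let the second chain copy the first.
\item Otherwise, generate the block path of chain \(1\) conditionally on the observed block path of chain \(0\). We use a maximal coupling of \(P^L(x,\cdot)\) and \(P^L(y,\cdot)\) at the block endpoint. Because \(\mathsf X\) is standard Borel, this coupling can be realized as a regular conditional law of the endpoint \(X^{i,1}_{i+(j+1)L}\) given \(X^{i,0}_{i+(j+1)L}\). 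We then fill in the interior of the block by Markov bridges given both endpoints.
\end{itemize}
This makes each marginal a \(P\)-chain. The meeting probability in each block is at least \(1/2\). Taking \(\kappa_i\) to be the first block endpoint at which the two chains agree gives \eqref{eq:anchored-meet}, and \eqref{eq:anchored-tail} holds with \(\rho_{\mathrm c}=2^{-1/L}\) and \(C_{\mathrm c}=2^{2}\) (up to the offset \(1\)). The stopping-time property with respect to \(\mathcal J^i_m\) is clear.

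\textbf{Conditional exchangeability.} This is the step we expect to be the main obstacle, because \(\omega^{i,0}\) is fixed to be the observed path while \(\omega^{i,1}\) is constructed from it. The point to check is that the joint transition mechanism at each stage is symmetric in the two arguments.
\begin{itemize}[leftmargin=12pt]
\item The first step is an independent product of two copies of \(P(X_{i-1},\cdot)\), hence symmetric.
\item The maximal coupling of \(P^L(x,\cdot)\) and \(P^L(y,\cdot)\) can be chosen symmetric: its kernel for \((x,y)\) is the image of its kernel for \((y,x)\) under swapping coordinates. Concretely, mass \(\mu\wedge\nu\) goes on the diagonal and the normalized product of the positive parts off the diagonal.
\item The bridges are filled in independently given the endpoints.
\end{itemize}
Hence the joint law of \((\omega^{i,0},\omega^{i,1})\) given \(X_{i-1}\) is a Markov chain on pairs whose kernel commutes with the swap, so the pair is exchangeable. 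The first chain has the correct law, so realizing chain \(1\) through conditional disintegration given chain \(0\) is legitimate. By the Markov property the future of \(\omega^{i,0}\) is conditionally independent of \(\mathcal P_{i-2}\) given \(X_{i-1}\), so conditioning on \(\mathcal P_{i-1}\) reduces to conditioning on \(X_{i-1}\). The same reduction lets us state the tail bound given \(\mathcal P_{i-1}\).

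\textbf{Prefix measurability (iii).} The auxiliary randomness is independent of \(\mathcal X\), and \(\omega^{i,1}\) is a measurable function of \((X_r)_{r\geq i-1}\) and that auxiliary randomness. Therefore \(\E[H_i\mid\mathcal X]\) is obtained by integrating out the auxiliary variables, which gives a function of \((X_r)_{r\geq i-1}\). If \(H_i\) depends only on coordinates up to \(\mathfrak b(m)\), a block endpoint, then by construction chain \(1\) up to that endpoint uses only chain-\(0\) coordinates up to the same endpoint. The blocks are generated sequentially, each conditioned only on the current block of chain \(0\), so the second claim of (iii) follows. The alignment of \(\mathfrak b(m)\) with block endpoints is exactly why blockwise rather than stepwise coupling is used.
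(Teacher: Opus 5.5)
Your proposal is correct and follows essentially the same construction as the paper. In both, the first step is refreshed independently, then swap-symmetric maximal couplings of $P^L$ are disintegrated given the anchored block endpoint, block interiors are filled with Markov bridges, exchangeability follows from swap-invariance of the block kernel, and prefix measurability follows from the causal, block-by-block dependence on the anchored path. The differences are cosmetic: you use an explicit symmetric coupling (mass $\mu\wedge\nu$ on the diagonal, normalized product of positive parts off it) rather than symmetrizing an arbitrary measurable maximal coupling, and you represent $\omega^{i,1}$ through independent auxiliary randomness rather than through the kernel $\mathbf K$.
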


\noindent\textbf{Comparison with standard couplings.}
The coupling in Lemma~\ref{lem:block-coupling} is designed to satisfy the
three properties above simultaneously. It combines an independent first-step
resampling with a symmetric blockwise maximal coupling. The independent
resampling provides the fresh first-step property needed to establish the
exact Stein identity \eqref{eq:Stein-identity}. The symmetry of the subsequent
coupling ensures conditional exchangeability of the two suffixes, while
maximal coupling yields a geometrically decaying
tail for the meeting time and hence sharp control of the coupling increment.

A further feature of the construction is that it preserves prefix
measurability, as formalized in
Lemma~\ref{lem:block-coupling}(iii). Specifically, if a functional depends
only on the coupled trajectories up to a given block boundary, then, after
conditioning on the observed trajectory, its conditional expectation depends
only on the observed trajectory up to the same boundary. This property allows
us to exploit the mixing behavior of the Markov chain when controlling the
fluctuations of the \textit{conditional} tensors arising in
Proposition~\ref{prop:OU-Stein}.

The preservation of prefix measurability follows from the recursive
blockwise construction. By contrast, classical maximal-coupling constructions for Markov chains may
use future trajectory information and therefore do not automatically preserve
the prefix-measurability property required here; see
\citet[Section~2]{moulos2021bicausal}. 
Consequently, a standard global maximal coupling cannot be applied directly
in our argument without additional structure.

\medskip
\noindent\textbf{Construction of the antisymmetric Stein coupling.} We now construct the antisymmetric Stein coupling based on the refresh-then-maximal coupling of the Markov chain. Specifically, fix $i\in \{1,\ldots,n\}$, for \(r\in\{0,1\}\) and \(j\geq i\), set
\[
    Y_j^{i,r}
    :=
    M_j
    \bigl(
        g(X_j^{i,r})
        -
        Pg(X_{j-1}^{i,r})
    \bigr),
\]
and define
\[
    \Xi_i
    :=
    Y_i^{i,1}-Y_i^{i,0},
    \qquad
    \Delta_i
    :=
    \sum_{j=i}^n
        \bigl(
            Y_j^{i,1}-Y_j^{i,0}
        \bigr).
\]
We also introduce the two coupled versions of the full sum:
\[
    S_n^{i,r}
    :=
    \sum_{j=1}^{i-1}Y_j
    +
    \sum_{j=i}^nY_j^{i,r},
    \qquad r\in\{0,1\}.
\]
By \eqref{eq:anchored-first-coordinate},
\begin{equation}
\label{eq:coupled-sum-identities}
    S_n^{i,0}=S_n,
    \qquad
    S_n^{i,1}=S_n+\Delta_i,
    \qquad
    Y_i^{i,0}=Y_i.
\end{equation}
Let \(I\) be uniformly distributed on \(\{1,\ldots,n\}\), independently of
the original chain and all auxiliary coupling randomness. Define
\begin{equation}
\label{eq:Stein-triple}
    W:=S_n,
    \qquad
    W':=S_n^{I,1}=S_n+\Delta_I,
    \qquad
    G:=\frac n2\,\Xi_I,
    \qquad
    \Delta:=W'-W=\Delta_I.
\end{equation}
For \(t>0\), set 
\begin{equation*}
    \tau_t
    :=
    \frac{\delta_t}{4\sqrt p},
    \qquad
    \mathcal A_t
    :=
    \{\norm{\Delta}\leq\tau_t\}.
\end{equation*}

\medskip
\noindent\textbf{Verification of the conditions in Proposition~\ref{prop:OU-Stein}.} Because \(q=(2+\eta)p>p\) and all summations are over finitely many terms, 
\(W,W',G\in L^p(\R^d)\). Moreover, $\E[W]=0$ and $\E[WW^\top]=I_d,$ by the martingale-difference property and the covariance normalization assumption. We first verify that \((W,W',G)\) satisfies antisymmetry \eqref{eq:abstract-Stein-assumptions}. By
Lemma~\ref{lem:block-coupling}(i), conditionally on
\(\mathcal P_{i-1}\), the two coupled suffixes are exchangeable.
Since their common prefix
\(\sum_{j=1}^{i-1}Y_j\) is \(\mathcal P_{i-1}\)-measurable, we have $\bigl(S_n^{i,0},S_n^{i,1},\Xi_i\bigr)
    \stackrel{\mathrm d}{=}
    \bigl(S_n^{i,1},S_n^{i,0},-\Xi_i\bigr)$ for each $i\in \{1,\ldots,n\}$.  Therefore,
\[
    (W,W',G)
    \stackrel{\mathrm d}{=}
    (W',W,-G).
\]

We next verify that \((W,W',G)\) is a Stein coupling satisfying \eqref{eq:Stein-identity}. Since \(g\in L^1(\pi)\),
\eqref{eq:anchored-first-step-law}, extended to \(g\)
by truncation, gives
\[
    \E\!\left[
        Y_i^{i,1}
        \,\middle|\,
        \mathcal X
    \right]
    =
    M_i\left(
        \E[g(X_i^{i,1})\mid\mathcal X]
        -Pg(X_{i-1})
    \right)
    =0.
\]
Because \(S_n^{i,0}=S_n\) is \(\mathcal X\)-measurable, for every bounded
measurable \(\varphi:\R^d\to\R^d\), we have $ \E\!\left[
        Y_i^{i,1}\cdot\varphi(S_n^{i,0})
    \right]=0.$  Conditional exchangeability of \((\omega^{i,0},\omega^{i,1})\) by Lemma~\ref{lem:block-coupling}(i) further implies
\[
    \E\!\left[
        Y_i^{i,0}\cdot\varphi(S_n^{i,1})
    \right]=0,
    \qquad \text{and} \qquad
    \E\!\left[
        Y_i^{i,1}\cdot\varphi(S_n^{i,1})
    \right]
    =
    \E\!\left[
        Y_i^{i,0}\cdot\varphi(S_n^{i,0})
    \right].
\]
Using \(\Xi_i=Y_i^{i,1}-Y_i^{i,0}\) and
\eqref{eq:coupled-sum-identities}, we obtain
\begin{equation}
\label{eq:fixed-i-Stein}
    \E\!\left[
        \Xi_i\cdot
        \bigl\{
            \varphi(S_n^{i,1})-\varphi(S_n^{i,0})
        \bigr\}
    \right]
    =
    2\E\!\left[
        Y_i\cdot\varphi(S_n)
    \right].
\end{equation}
Averaging over \(I\) gives
\[
\begin{aligned}
    \E\!\left[
        G\cdot\{\varphi(W')-\varphi(W)\}
    \right]
    &=
    \frac12\sum_{i=1}^n
    \E\!\left[
        \Xi_i\cdot
        \{\varphi(S_n^{i,1})-\varphi(S_n^{i,0})\}
    \right] \\
    &=
    \sum_{i=1}^n
    \E\!\left[
        Y_i\cdot\varphi(S_n)
    \right]
    =
    \E\!\left[
        W\cdot\varphi(W)
    \right],
\end{aligned}
\]
which is \eqref{eq:Stein-identity}.  Finally, under
\((W,W',G)\mapsto(W',W,-G)\), the increment
\(\Delta=W'-W\) is mapped to \(-\Delta\). Hence
\(\mathcal A_t=\{\norm{\Delta}\leq\tau_t\}\) satisfies \eqref{eq:At_invariance}, and under
\(\mathcal A_t\), it holds that
\[
    \frac{\sqrt{p-1}}{\delta_t}\norm{\Delta}
    \leq
    \frac{\sqrt{p-1}}{4\sqrt p}
    \leq
    \frac14.
\]

Consequently, we can apply Proposition \ref{prop:OU-Stein} to our fresh-then-maximal coupling to bound the relative score \(\norm{\mathfrak{s}_t(W_t)}_{L^p}\). By bounding the three terms in operator discrepancies separately, as stated in Lemma~\ref{lem:tensors}, and applying Lemma~\ref{lem:OU-transport}, we then derive the $\W_p$ Gaussian approximation of $S_n$ in Theorem~\ref{thm:main}. The detailed proof is provided in Section~\ref{sec:completion}.
 
\subsection{Proof of Lemma~\ref{lem:block-coupling}}
\label{sec:proof-block-coupling}

We will consider blockwise maximal coupling of block length $L$. 
Choose \(L\geq2\) sufficiently large that
\begin{equation}
\label{eq:block-Dobrushin}
    \sup_{u,v\in\mathsf X}
    \norm{P^L(u,\cdot)-P^L(v,\cdot)}_{\TV}
    \leq
    2C_0\rho_0^L
    \leq
    \frac12.
\end{equation}

We first record the measurable coupling ingredients used below. Since
\(\mathsf X\) is a standard Borel space, measurable maximal-coupling and
disintegration results allow the following kernels to be chosen jointly
measurably; see, e.g.,
\cite{kallenberg1997foundations}.
For each \(u,v\in\mathsf X\), let
\(\Gamma_{u,v}\) be a symmetric maximal coupling of
\(P^L(u,\cdot)\) and \(P^L(v,\cdot)\), satisfying
\begin{equation}
\label{eq:symmetric-maximal-coupling}
    \Gamma_{u,v}
    =
    \mathsf s_{\#}\Gamma_{v,u},
    \qquad
    \mathsf s(y,z):=(z,y).
\end{equation}
Here, \(\mathsf s_{\#}\Gamma_{v,u}\) denotes the pushforward of
\(\Gamma_{v,u}\) under \(\mathsf s\); that is, for every measurable
\(A\subseteq\mathsf X\times\mathsf X\),
\[
    \mathsf s_{\#}\Gamma_{v,u}(A)
    :=
    \Gamma_{v,u}\!\left(\mathsf s^{-1}(A)\right).
\]
Such symmetry may be enforced, starting from any measurable family
\(\{\Gamma_{u,v}^0\}\) of maximal couplings, by replacing it with
\[
    \Gamma_{u,v}
    :=
    \frac12
    \left(
        \Gamma_{u,v}^0
        +
        \mathsf s_{\#}\Gamma_{v,u}^0
    \right).
\]
Such coupling preserves both the marginals and maximality. Let
\(\mathsf B_L(u,y;\cdot)\) denote a measurable \(L\)-step Markov-bridge
kernel, so that the law of an \(L\)-step segment of a \(P\)-chain started
from \(u\) admits the disintegration
\begin{align}
\label{eq:bridge-disintegration}
 P(u,\mathrm dx_1)
  P(x_1,\mathrm dx_2)
  \cdots
  P(x_{L-1},\mathrm dy)=P^L(u,\mathrm dy)\,
\mathsf B_L
\bigl(
    u,y;
    \mathrm dx_1\cdots\mathrm dx_{L-1}
\bigr).
\end{align}
Finally, there exists a measurable probability kernel $\Lambda$ such that 
\begin{equation}
\label{eq:maximal-coupling-disintegration}
    \Gamma_{u,v}(\mathrm dy,\mathrm dz)
    =
    P^L(u,\mathrm dy)\,
    \Lambda(u,v,y;\mathrm dz).
\end{equation}

\noindent\textbf{Conditional construction given the anchored trajectory.}
Fix \(x\in\mathsf X\) and a realization
\[
    \omega^0=(\omega_k^0)_{k\geq0},
    \qquad
    \omega_0^0=x,
\]
of the first trajectory. We construct a second trajectory
\(\omega^1=(\omega_k^1)_{k\geq0}\) conditionally on \(\omega^0\).

Set \(\omega_0^1=x\), and sample $\omega_1^1\sim P(x,\cdot)$ independently of the entire trajectory \(\omega^0\). If
\(\omega_1^1=\omega_1^0\), set
\(\omega_k^1=\omega_k^0\) for all \(k\geq1\). Otherwise, introduce the block boundaries
\[
    t_j:=1+jL,
    \qquad j\geq0.
\]
Suppose that the two trajectories have not yet coupled at time \(t_j\), and
write
\[
    u:=\omega_{t_j}^0,
    \qquad
    v:=\omega_{t_j}^1,
    \qquad
    y:=\omega_{t_{j+1}}^0.
\]
Sample the
endpoint \(z:=\omega_{t_{j+1}}^1\) of the second trajectory according to $z\sim\Lambda(u,v,y;\cdot),$ and then sample the intermediate states of the second trajectory according to
the bridge law $\mathsf B_L(v,z;\cdot).$ If \(z=y\), couple the two trajectories identically from time \(t_{j+1}\)
onward; otherwise, repeat the same procedure on the next block.

This recursive construction defines a measurable probability  kernel
\[
    \mathbf K(x,\omega^0; \mathrm d\omega^1).
\]
This kernel has two important properties. First, because the
first transition is sampled independently of the entire anchored trajectory,
for every bounded measurable \(f\),
\begin{equation}
\label{eq:kernel-first-step-freshness}
    \int
        f(\omega_1^1)\,
        \mathbf K(x,\omega^0; \mathrm d\omega^1)
    =
    Pf(x).
\end{equation}
Second, for every block boundary $b\in\{0,1,1+L,1+2L,\ldots\},$ the conditional law of the second trajectory $\omega^1$ through time \(b\) depends on
the anchored trajectory $\omega^0$ only through the prefix $(\omega_0^0,\ldots,\omega_b^0)$. Equivalently, there
exists a measurable probability kernel \(\mathbf K_b\) such that
\begin{equation}
\label{eq:causal-prefix-kernel}
\begin{aligned}
\mathbf K\!\left(
    x,\omega^0;
    \{\omega^1:
        (\omega_0^1,\ldots,\omega_b^1)\in A
    \}
\right)=
\mathbf K_b\!\left(
    x,
    (\omega_0^0,\ldots,\omega_b^0);
    A
\right)
\end{aligned}
\end{equation}
for every measurable \(A\subseteq\mathsf X^{b+1}\).

\medskip
\noindent\textbf{Symmetry and marginal laws.}
Define the joint law of the two trajectories by
\[
    \mathbf Q_x(\mathrm d\omega^0,\mathrm d\omega^1)
    :=
    \mathbf P_x(\mathrm d\omega^0)\,
    \mathbf K(x,\omega^0;\mathrm d\omega^1).
\]
At the first step, \(\omega_1^0\) and \(\omega_1^1\) are sampled
independently from the common distribution \(P(x,\cdot)\), and hence their
joint law is invariant under interchanging the two trajectories.

Now consider a block before the two trajectories meet, and let
\((u,v)\) be their states at the beginning of the block. Denote by
\((y,z)\) their states at the end of the block, and by
\((\mathbf x,\mathbf x')\) their respective interior states. By
\eqref{eq:bridge-disintegration} and
\eqref{eq:maximal-coupling-disintegration}, the conditional joint law of
the two blocks is
\begin{equation}
\label{eq:block_law}
    \Gamma_{u,v}(\mathrm dy,\mathrm dz)\,
    \mathsf B_L(u,y;\mathrm d\mathbf x)\,
    \mathsf B_L(v,z;\mathrm d\mathbf x').
\end{equation}
Since
\(\Gamma_{u,v}=\mathsf s_{\#}\Gamma_{v,u}\), interchanging the two
trajectories transforms \eqref{eq:block_law} into the same block law with
the initial states interchanged from \((u,v)\) to \((v,u)\). Thus, the
block transition rule is invariant under swapping the two trajectories.
The diagonal coupling used after the trajectories meet clearly has the
same property. Therefore, by induction over successive blocks, every finite pair of
trajectory prefixes is exchangeable. It follows that
\begin{equation}
\label{eq:Q-symmetry}
    (\omega^0,\omega^1)
    \stackrel{\mathrm d}{=}
    (\omega^1,\omega^0)
    \qquad\text{under }\mathbf Q_x.
\end{equation}
Finally, the first marginal of \(\mathbf Q_x\) is \(\mathbf P_x\) by
definition, and hence exchangeability implies that the second marginal is
also \(\mathbf P_x\).

\medskip
\noindent\textbf{Geometric meeting time.} With a slight abuse of notation, we also use \(\mathbf Q_x\) to denote
probabilities and conditional probabilities under this joint law. Let
\[
    N
    :=
    \inf\{j\geq0:
        \omega_{t_j}^0=\omega_{t_j}^1\},
    \qquad
    \kappa:=1+LN.
\]
At any block boundary at which the trajectories have not yet met, since \(\Gamma_{u,v}\) is a maximal coupling, \eqref{eq:block-Dobrushin} implies
\[
\begin{aligned}
&\mathbf Q_x\!\left(
    \omega_{t_{j+1}}^0=\omega_{t_{j+1}}^1
    \,\middle|\,
    \sigma(\omega_k^0,\omega_k^1:0\leq k\leq t_j)
\right)
\\
&\qquad=
1-\norm{P^L(u,\cdot)-P^L(v,\cdot)}_{\TV}
\geq
\frac12.
\end{aligned}
\]
Therefore
there exist \(C_{\mathrm c}<\infty\) and
\(\rho_{\mathrm c}\in(0,1)\), depending only on \(L\), such that
\begin{equation}
\label{eq:canonical-meeting-tail}
    \sup_{x\in\mathsf X}
    \mathbf Q_x(\kappa>m)
    \leq
    C_{\mathrm c}\rho_{\mathrm c}^{\,m},
    \qquad m\geq0.
\end{equation}
Since coupling is checked at block boundaries and is permanent once it
occurs, \(\kappa\) is a stopping time for the joint prefix filtration.

\medskip
\noindent\textbf{Transfer to the stationary chain.}
For each \(i\in\{1,\ldots,n\}\), set $\omega^{i,0}
    :=
    (X_{i-1+k})_{k\geq0},$ and, conditionally on \(\mathcal X\), generate $\omega^{i,1}
    \sim
    \mathbf K
    (X_{i-1},\omega^{i,0},\cdot).$ The anchoring property
\eqref{eq:anchored-first-coordinate} is immediate. Conditionally on \(\mathcal P_{i-1}\), the Markov property implies that the
observed suffix \(\omega^{i,0}\) has law
\(\mathbf P_{X_{i-1}}\). Hence, by the definition of
\(\mathbf Q_x\),
\[
    \law\!\left(
        \omega^{i,0},\omega^{i,1}
        \,\middle|\,
        \mathcal P_{i-1}
    \right)
    =
    \mathbf Q_{X_{i-1}}.
\]
It follows from \eqref{eq:Q-symmetry} that
\((\omega^{i,0},\omega^{i,1})\) is conditionally exchangeable given
\(\mathcal P_{i-1}\), and both coordinates are \(P\)-chains started from
\(X_{i-1}\). This proves the first part of
Lemma~\ref{lem:block-coupling}(i). The freshness property follows directly from
\eqref{eq:kernel-first-step-freshness}. Indeed, for every bounded measurable
\(f\),
\begin{align*}
    \E\!\left[
        f(X_i^{i,1})
        \,\middle|\,
        \mathcal X
    \right]
    =
    \int
        f(\omega_1^1)\,
        \mathbf K
        (X_{i-1},\omega^{i,0},\mathrm d\omega^1)=
    Pf(X_{i-1}),\quad\text{a.s.}
\end{align*}
Let \(\kappa_i\) be the meeting time obtained from the canonical construction
applied to
\((\omega^{i,0},\omega^{i,1})\). Then
\eqref{eq:anchored-meet} holds by construction. Moreover,
\eqref{eq:canonical-meeting-tail} and the preceding conditional-law identity
give
\[
    \Pp\!\left(
        \kappa_i>m
        \,\middle|\,
        \mathcal P_{i-1}
    \right)
    =
    \mathbf Q_{X_{i-1}}(\kappa >m)
    \leq
    C_{\mathrm c}\rho_{\mathrm c}^{\,m},
    \qquad m\geq0,
    \quad\text{a.s.},
\]
which proves \eqref{eq:anchored-tail} and
Lemma~\ref{lem:block-coupling}(ii).  

Finally, let
\(H_i=H_i(\omega^{i,0},\omega^{i,1})\) be integrable. By the conditional
construction,
\begin{equation}
\label{eq:conditional-kernel-proof}
    \E[H_i\mid\mathcal X]
    =
    \int
        H_i(\omega^{i,0},\omega^1)\,
        \mathbf K
        (X_{i-1},\omega^{i,0},\mathrm d\omega^1).
\end{equation}
The right-hand side depends on the observed trajectory only through the
suffix \((X_r)_{r\geq i-1}\), which proves the first assertion in
Lemma~\ref{lem:block-coupling}(iii). Now suppose that \(H_i\) is
\(\mathcal J_{\mathfrak b(m)}^i\)-measurable. By definition, \(\mathfrak b(m)\) is a block boundary. \eqref{eq:causal-prefix-kernel} implies that the
conditional law of $(X_{i-1+k}^{i,1})_{0\leq k\leq \mathfrak b(m)}$ given the observed trajectory depends on that trajectory only through $X_{i-1},\ldots,X_{i-1+\mathfrak b(m)}.$ Since \(H_i\) itself depends only on the two coupled prefixes through time
\(\mathfrak b(m)\), \eqref{eq:conditional-kernel-proof} is therefore measurable with
respect to $\sigma\!\left(
        X_{i-1},\ldots,X_{i-1+\mathfrak b(m)}
    \right).$ This proves Lemma~\ref{lem:block-coupling}(iii) and completes the proof.

\section{Completion of the Proof of Theorem~\ref{thm:main}}
\label{sec:completion}
Define
\[
    B_p
    :=
    p^{5/2}\norm{A}_4^2,
\]
and
\[
    C_1
    :=
    \sum_{i=1}^n
        \E\!\left[
            \norm{\Xi_i}\norm{\Delta_i}
        \right],
    \qquad
    C_3
    :=
    \sum_{i=1}^n
        \E\!\left[
            \norm{\Xi_i}\norm{\Delta_i}^3
        \right].
\]
Recall that
\(\mathcal A_t=\{\norm{\Delta}\leq\tau_t\}\).
Conditioning on the uniform index \(I\) in \eqref{eq:Stein-triple} gives
\begin{align*}
    R_t
    &=
    \sum_{i=1}^n
        \E\!\left[
            \Xi_i
            \one_{\{\norm{\Delta_i}>\tau_t\}}
            \,\middle|\,
            W
        \right],
    \\
    T_{k,t}
    &=
    \frac12
    \sum_{i=1}^n
        \E\!\left[
            \Xi_i\ot\Delta_i^{\ot k}
            \one_{\{\norm{\Delta_i}\leq\tau_t\}}
            \,\middle|\,
            W
        \right],
    \qquad k\geq1.
\end{align*}
For \(\tau>0\), define
\begin{align*}
    \mathfrak m_1(\tau)
    &:=
    \sum_{i=1}^n
        \E\!\left[
            \norm{\Xi_i}\norm{\Delta_i}
            \one_{\{\norm{\Delta_i}>\tau\}}
        \right],
    \\
    \mathfrak m_3(\tau)
    &:=
    \sum_{i=1}^n
        \E\!\left[
            \norm{\Xi_i}\norm{\Delta_i}^3
            \one_{\{\norm{\Delta_i}\leq\tau\}}
        \right].
\end{align*}
The estimates needed
below are collected in the following lemma.

\begin{lemma}[Tensor Estimates]
\label{lem:tensors}
For every \(t>0\),
\begin{align}
    \norm{R_t}_{L^p}
    &\leq
    C(\eta,C_0,\rho_0)\tau_t^{-1}B_p,
    \label{eq:R-bound}
    \\
    \norm{T_{1,t}-I_d}_{L^p(\HS)}
    &\leq
    C(\eta,C_0,\rho_0)\left\{
        B_p+\mathfrak m_1(\tau_t)
    \right\},
    \label{eq:T1-bound}
    \\
    \norm{T_{k,t}}_{L^p(\HS)}
    &\leq
    C(\eta,C_0,\rho_0)\left\{
        \tau_t^{k-1}B_p
        +
        \one_{\{k\geq3,\;k\ \mathrm{odd}\}}
        \tau_t^{k-3}\mathfrak m_3(\tau_t)
    \right\},
    \qquad k\geq2.
    \label{eq:Tk-bound}
\end{align}
Moreover, for every \(\tau>0\),
\begin{align}
    \mathfrak m_1(\tau)
    &\leq
    \min\left\{
        C_1,\tau^{-2}C_3
    \right\},
    \qquad
    \mathfrak m_3(\tau)
    \leq
    \min\left\{
        C_3,\tau^2C_1
    \right\},
    \label{eq:M-min}
    \\
    C_1
    &\leq
    C(\eta,C_0,\rho_0)\sqrt d\,\norm{A}_2,
    \qquad
    C_3
    \leq
    C(\eta,C_0,\rho_0)\norm{A}_4^4.
    \label{eq:C1-C3-bounds}
\end{align}
\end{lemma}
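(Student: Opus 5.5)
The plan is to treat the five estimates separately. Throughout, the two basic inputs are the geometric meeting time from Lemma~\ref{lem:block-coupling}(ii) and prefix measurability from Lemma~\ref{lem:block-coupling}(iii).

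\textbf{Moment bounds for a single coupling.} Since $Y_j^{i,1}=Y_j^{i,0}$ once both indices satisfy $j-i+1\ge \kappa_i+1$, only the first $\kappa_i$ differences survive. Hence
\[
\Delta_i=\sum_{j=i}^{\min(n,i-1+\kappa_i)}(Y_j^{i,1}-Y_j^{i,0}).
\]
Each summand has $L^q$ norm at most $2a_j$, using stationarity of both suffixes given $\mathcal P_{i-1}$. Write $\Delta_i=\sum_{k\ge0}(Y_{i+k}^{i,1}-Y_{i+k}^{i,0})\one_{\{\kappa_i>k\}}$, bound each term by H\"older, and use the tail \eqref{eq:anchored-tail}. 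This gives
\[
\norm{\one_{\{\kappa_i>k\}}(Y^{i,1}_{i+k}-Y^{i,0}_{i+k})}_{L^{s}}\le C a_{i+k}\rho^{k}
\]
for any $s<q$, with $\rho=\rho_{\mathrm c}^{1/s-1/q}$. This is where $q=(2+\eta)p$ enters: for $s\le q/2$ the exponent gap is of order $1/p$, which costs only polynomial factors in $p$. Then $\norm{\Delta_i}_{L^{s}}\le C\sum_k\rho^k a_{i+k}$ and $\norm{\Xi_i}_{L^q}\le 2a_i$.

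Consequently $\E\norm{\Xi_i}\norm{\Delta_i}^3\le C a_i(\sum_k\rho^ka_{i+k})^3$. Summing over $i$ and applying Young/Hölder for the geometric convolution gives $C_3\le C\norm{A}_4^4$. Similarly $C_1\le C\sum_i a_i\sum_k\rho^ka_{i+k}\le C\norm{A}_2^2$. The stated $\sqrt d\norm{A}_2$ then follows from $\norm{A}_2\ge c/\sqrt d$; this holds because $d=\operatorname{tr}\operatorname{Cov}(S_n)=\sum_i\E\norm{Y_i}^2\le\norm{A}_2^2$. The bounds \eqref{eq:M-min} are immediate Markov-type comparisons of indicators, namely $\one_{\{\norm{\Delta}>\tau\}}\le\norm{\Delta}^2/\tau^2$ and $\norm{\Delta}^3\one_{\{\norm{\Delta}\le\tau\}}\le\tau^2\norm{\Delta}$.

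\textbf{Tensor estimates via a centering/mixing decomposition.} For \eqref{eq:T1-bound}, write $T_{1,t}-I_d=\frac12\sum_i(\E[\Xi_i\ot\Delta_i\mid W]-\E[\Xi_i\ot\Delta_i])$ minus the truncation error. The truncation error is controlled in $L^p$ via conditional Jensen and $\mathfrak m_1(\tau_t)$, using \eqref{eq:Stein-covariance-calibration} for the mean. For the fluctuation term, apply conditional Jensen to replace $\E[\cdot\mid W]$ by $\E[\cdot\mid\mathcal X]$, and set $U_i:=\E[\Xi_i\ot\Delta_i\mid\mathcal X]-\E[\Xi_i\ot\Delta_i]$. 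Truncate $\Delta_i$ at block boundary $\mathfrak b(m)$ so that, by Lemma~\ref{lem:block-coupling}(iii), the truncated $U_i^{(m)}$ is a function of $X_{i-1},\dots,X_{i-1+\mathfrak b(m)}$. The truncation remainder decays like $\rho^m$ in $L^p$, via the meeting tail and H\"older with $s\in(p,q/2)$. The sum $\sum_i U_i^{(m)}$ of centered, locally dependent functionals of a uniformly ergodic chain is then handled by a Rosenthal/Burkholder-type inequality for mixing sequences. Its $L^p(\HS)$ norm is at most $Cp\,(\sum_i\norm{U_i}_{L^{p'}}^2)^{1/2}$ up to $m$-dependent block factors. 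With $\norm{U_i}\lesssim a_i\sum_k\rho^ka_{i+k}$, this yields $\lesssim p^{O(1)}\norm{A}_4^2$. Balancing $m$ and tracking the $p$-powers should give $B_p=p^{5/2}\norm{A}_4^2$.

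The same decomposition handles $T_{k,t}$ for $k\ge2$ and $R_t$. For even $k$, and for the centered part of odd $k$, the antisymmetry $(\Xi_i,\Delta_i)\mapsto(-\Xi_i,-\Delta_i)$ under exchange given $\mathcal P_{i-1}$ makes $\E[\Xi_i\ot\Delta_i^{\ot k}\one]$ vanish for even $k$. Hence only fluctuations remain; these are bounded as above after using $\norm{\Delta_i}^{k-1}\one\le\tau_t^{k-1}$, giving $\tau_t^{k-1}B_p$. For odd $k\ge3$ the mean is bounded by $\tau_t^{k-3}\mathfrak m_3(\tau_t)$. For $R_t$, the mean of $\Xi_i\one_{\{\norm{\Delta_i}>\tau\}}$ is zero by antisymmetry. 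The fluctuation uses $\norm{\Xi_i}\one_{\{\norm{\Delta_i}>\tau\}}\le\norm{\Xi_i}\norm{\Delta_i}/\tau$ and runs through the same mixing argument, giving $\tau_t^{-1}B_p$.

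\textbf{Main obstacle.} The hardest step is the fluctuation bound: getting an $L^p$ concentration inequality for $\sum_i U_i$ with explicit $p$-dependence, in terms of $\norm{A}_4^2$ rather than $\norm{A}_2$. I would first try a martingale approximation of $\sum_iU_i^{(m)}$ via a Poisson equation for the block chain $(X_{i-1},\dots,X_{i-1+\mathfrak b(m)})$. Lemma~\ref{lem:Lr-Poisson}, applied at exponent $s$, controls the correctors. Burkholder's inequality with constant $O(p)$ then finishes the bound. The truncation level $m\asymp p$ contributes the remaining $p^{3/2}$.
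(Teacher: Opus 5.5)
Your reductions agree with the paper: passing from $W$ to $\mathcal X$ by conditional Jensen, killing means via $(\Xi_i,\Delta_i)\stackrel{\mathrm d}{=}(-\Xi_i,-\Delta_i)$, dominating the $R_t$ and $T_{k,t}$ summands by $\tau_t^{-1}$ and $\tau_t^{k-1}$ times $\norm{\Xi_i}\norm{\Delta_i}$, the Markov-type bounds \eqref{eq:M-min}, and the $C_3$ bound. The gap is the core fluctuation estimate $\norm{\sum_i(\E[\Phi_i\mid W]-\E\Phi_i)}_{L^p}\lesssim\Lambda B_p$, which does not follow from a single-level truncation.

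\textbf{Single-level truncation fails.} Bounding the remainder termwise, as you propose, gives
$\sum_i\norm{U_i-U_i^{(m)}}_{L^p}\lesssim\Lambda e^{-cm/p}\sum_i a_i\overline a_i$, where $\overline a_i:=\sum_{\ell\ge0}e^{-c\ell/p}a_{i+\ell}$. The rate is $e^{-cm/p}$ because the H\"older gap is of order $1/p$. This is an $\ell^1$-sum of order $\norm{A}_2^2$, i.e.\ $O(1)$ in the balanced regime, not $O(\norm{A}_4^2)=O(n^{-1/2})$.
\begin{itemize}
\item With your choice $m\asymp p$, the factor $e^{-cm/p}$ is $O(1)$, so the remainder is not controlled at all.
\item Pushing it below $\norm{A}_4^2$ needs $m\gtrsim p\log(\norm{A}_2^2/\norm{A}_4^2)$. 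Any bound for window functionals of length $\mathfrak b(m)$ (blocking, or a window-chain Poisson corrector, whose size grows with the window) then loses a logarithmic factor in $n$.
\item The Poisson plan has further problems: $U_i^{(m)}$ is a different function for each $i$, so there is no single Poisson equation, and Lemma~\ref{lem:Lr-Poisson} only treats single-state functions under $P$.
\end{itemize}

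\textbf{The missing idea.} Keep every truncation error inside a martingale square function. Write $U_i-\E U_i=\sum_{\ell\in\mathbb Z}\Pi_{i-1+\ell}U_i$, with $\Pi_j$ the projections onto the two-sided past $\mathcal P_j$, and truncate at the \emph{lag-dependent} level $m=\lfloor\ell/2\rfloor$.
\begin{itemize}
\item For $\ell\ge2L$, prefix measurability gives $\Pi_{i-1+\ell}U_i^{(m)}=0$. Hence $\norm{\Pi_{i-1+\ell}U_i}_{L^p}\le2\norm{U_i-U_i^{(m)}}_{L^p}\lesssim\Lambda e^{-c\ell/p}a_i\overline a_i$.
\item For $\ell\le-1$, $U_i$ is $\sigma(X_r:r\ge i-1)$-measurable, so $\beta$-mixing with Berbee's coupling gives the same decay.
\item For each fixed $\ell$, $\{\Pi_{i-1+\ell}U_i\}_i$ is a martingale-difference sequence. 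The Hilbert-space Burkholder inequality (constant $\sqrt{p-1}$) gives $\sqrt p\,\Lambda e^{-c|\ell|/p}\norm{(a_i\overline a_i)_i}_2$.
\item Summing over $\ell$ costs a factor $p$. Young's inequality gives $\norm{(a_i\overline a_i)_i}_2\le\norm{A}_4\norm{(\overline a_i)_i}_4\le Cp\norm{A}_4^2$. Together this is exactly $B_p=p^{5/2}\norm{A}_4^2$.
\end{itemize}

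\textbf{The $C_1$ bound is argued in the wrong direction.} You prove $C_1\le C\norm{A}_2^2$ and then invoke $d=\sum_i\E\norm{Y_i}^2\le\norm{A}_2^2$. That is a \emph{lower} bound $\norm{A}_2\ge\sqrt d$. Converting $\norm{A}_2^2$ into $\sqrt d\,\norm{A}_2$ would need $\norm{A}_2\lesssim\sqrt d$, which fails in general because the $L^q$-sizes can far exceed the $L^2$-sizes. So your bound is strictly weaker than the claim. Instead keep $\norm{\Xi_i}_{L^2}\le2\norm{Y_i}_{L^2}$ and apply Cauchy--Schwarz in $i$:
$C_1\le2\bigl(\sum_i\E\norm{Y_i}^2\bigr)^{1/2}\norm{(\norm{\Delta_i}_{L^2})_i}_2\le C\sqrt d\,\norm{A}_2$.

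\textbf{The truncated piece in \eqref{eq:T1-bound} must be centered.} The term $\frac12\sum_i\E[\Xi_i\ot\Delta_i\one_{\{\norm{\Delta_i}>\tau_t\}}\mid W]$ cannot be bounded in $L^p$ by $\mathfrak m_1(\tau_t)$ via conditional Jensen, since that yields an $L^p$ norm rather than an expectation. Center it and pass it through the fluctuation bound with $\Lambda=1$. Only its mean is then bounded by $\frac12\mathfrak m_1(\tau_t)$.
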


Since $\frac{(p-1)\tau_t^2}{\delta_t^2}
    =
    \frac{p-1}{16p}
    \leq
    \frac1{16},$ Lemma~\ref{lem:tensors} and Minkowski's inequality in the weighted
\(\ell_2\)-sum over \(k\) give
\begin{align*}
    &\tau_t^{-1}B_p
    +
    \frac{\sqrt{p-1}}{\delta_t}B_p
    +
    \left\{
        \sum_{k=2}^\infty
        \frac{(p-1)^k\tau_t^{2k-2}}
             {k!\,\delta_t^{2k}}
        B_p^2
    \right\}^{1/2}
    \leq
    C(\eta,C_0,\rho_0)\frac{\sqrt p\,B_p}{\delta_t},
    \\
    &\left\{
        \sum_{\substack{k\geq3\\k\ \mathrm{odd}}}
        \frac{(p-1)^k\tau_t^{2k-6}}
             {k!\,\delta_t^{2k}}
        \mathfrak m_3(\tau_t)^2
    \right\}^{1/2}
    \leq
    C(\eta,C_0,\rho_0)\frac{p^{3/2}}{\delta_t^3}
        \mathfrak m_3(\tau_t).
\end{align*}
Consequently, Proposition~\ref{prop:OU-Stein} yields, for every \(t>0\),
\begin{equation}
\label{eq:score-before-min}
    \norm{\mathfrak{s}_t(W_t)}_{L^p}
    \leq
    C(\eta,C_0,\rho_0)e^{-t}\left[
        \frac{\sqrt p\,B_p}{\delta_t}
        +
        \frac{\sqrt p}{\delta_t}\mathfrak m_1(\tau_t)
        +
        \frac{p^{3/2}}{\delta_t^3}\mathfrak m_3(\tau_t)
    \right].
\end{equation}
To control the last two terms, set
\[
    a_t:=\frac{\sqrt p\,C_1}{\delta_t},
    \qquad
    b_t:=\frac{p^{3/2}C_3}{\delta_t^3}.
\]
Using \eqref{eq:M-min} and
\(\tau_t^2=\delta_t^2/(16p)\), we obtain
\[
    \frac{\sqrt p}{\delta_t}\mathfrak m_1(\tau_t)
    \leq
    \min\{a_t,16b_t\},
    \qquad
    \frac{p^{3/2}}{\delta_t^3}\mathfrak m_3(\tau_t)
    \leq
    \min\left\{b_t,\frac{a_t}{16}\right\}.
\]
Hence
\[
    \frac{\sqrt p}{\delta_t}\mathfrak m_1(\tau_t)
    +
    \frac{p^{3/2}}{\delta_t^3}\mathfrak m_3(\tau_t)
    \leq
    17\min\{a_t,b_t\}.
\]
Since
\[
    \sqrt p\,B_p
    =
    p^3\norm{A}_4^2,
\]
substitution into \eqref{eq:score-before-min} gives
\begin{equation}
\label{eq:score-final}
    \norm{\mathfrak{s}_t(W_t)}_{L^p}
    \leq
    C(\eta,C_0,\rho_0)e^{-t}\Bigg[
        \frac{
            p^3\norm{A}_4^2
        }{\delta_t}
        +
        \min\left\{
            \frac{\sqrt p\,C_1}{\delta_t},
            \frac{p^{3/2}C_3}{\delta_t^3}
        \right\}
    \Bigg].
\end{equation}
It remains to integrate the pointwise score estimate. Under the change of
variables $u=\delta_t=\sqrt{e^{2t}-1},$ we have
\[
    e^{-t}=(1+u^2)^{-1/2},
    \qquad
    \mathrm dt=\frac{u}{1+u^2}\,\mathrm du.
\]
Therefore,
\begin{equation}
\label{eq:first-integral}
    \int_0^\infty
        \frac{e^{-t}}{\delta_t}\,\mathrm dt
    =
    \int_0^\infty
        \frac{\mathrm du}{(1+u^2)^{3/2}}
    =
    1.
\end{equation}
Moreover, for every \(a,b\geq0\),
\begin{equation}
\label{eq:min-integral}
    \int_0^\infty
        e^{-t}
        \min\left\{
            \frac{a}{\delta_t},
            \frac{b}{\delta_t^3}
        \right\}
        \,\mathrm dt
    =
    \int_0^\infty
        \frac{\min\{a,b/u^2\}}
             {(1+u^2)^{3/2}}
        \,\mathrm du
    \leq
    2\sqrt{ab}.
\end{equation}
Indeed, when \(ab>0\), the last inequality follows by discarding the
denominator and splitting the integral at \(u=\sqrt{b/a}\); the case
\(ab=0\) is immediate. Applying Lemma~\ref{lem:OU-transport} to \eqref{eq:score-final}, together
with \eqref{eq:first-integral}--\eqref{eq:min-integral} with $a=\sqrt p C_1$ and $ b=p^{3/2}C_3,$ gives
\begin{equation}
\label{eq:before-low-moments}
    \W_p\!\left(
        \law(W),
        \mathcal N(0,I_d)
    \right)
    \leq
    C(\eta,C_0,\rho_0)\left\{
        p^3\norm{A}_4^2
        +
        p\sqrt{C_1C_3}
    \right\}.
\end{equation}
Finally, \eqref{eq:C1-C3-bounds} implies
\[
    \sqrt{C_1C_3}
    \leq
    C(\eta,C_0,\rho_0) d^{1/4}
    \norm{A}_2^{1/2}
    \norm{A}_4^2.
\]
Since \(W=S_n\), substituting this bound into
\eqref{eq:before-low-moments} proves \eqref{eq:main-bound} and completes the
proof of Theorem~\ref{thm:main}.

\subsection{Proof of Lemma~\ref{lem:tensors}}
\label{sec:proof-tensors}

We first isolate the conditional-fluctuation estimate. Its proof is deferred to
Section~\ref{sec:proof-suffix-fluctuation}.

\begin{lemma}
\label{lem:suffix-fluctuation}
Let \(\mathsf H\) be a finite-dimensional real Hilbert space. For each
\(1\leq i\leq n\), let
\(\phi_i:\R^d\times\R^d\to\mathsf H\) be measurable and set $\Phi_i:=\phi_i(\Xi_i,\Delta_i).$ Suppose that, for some deterministic \(\Lambda\geq0\),
\begin{equation}
\label{eq:suffix-functional-domination}
    \norm{\Phi_i}_{\mathsf H}
    \leq
    \Lambda\norm{\Xi_i}\norm{\Delta_i}
    \qquad\text{a.s.}
\end{equation}
Then
\begin{equation}
\label{eq:suffix-fluctuation}
    \norm{
        \sum_{i=1}^n
        \bigl(
            \E[\Phi_i\mid W]-\E[\Phi_i]
        \bigr)
    }_{L^p(\mathsf H)}
    \leq
    C(\eta,C_0,\rho_0)
    \Lambda B_p.
\end{equation}
\end{lemma}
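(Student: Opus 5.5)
The plan is to first remove the conditioning on \(W\) and condition on the whole chain instead. Since \(W=S_n\) is \(\mathcal X\)-measurable, we have \(\E[\Phi_i\mid W]=\E[\E[\Phi_i\mid\mathcal X]\mid W]\). Conditional Jensen in \(L^p(\mathsf H)\) then bounds the left side of \eqref{eq:suffix-fluctuation} by \(\norm{\sum_i (F_i-\E F_i)}_{L^p(\mathsf H)}\), where \(F_i:=\E[\Phi_i\mid\mathcal X]\). By Lemma~\ref{lem:block-coupling}(iii), \(F_i\) is a functional of the suffix \((X_r)_{r\ge i-1}\). The task thus becomes a moment bound for a sum of centered, essentially local functionals of a uniformly ergodic chain. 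Throughout I fix an exponent \(s\in(p,q/2)\) with \(1/p-1/s\) and \(1/(2s)-1/q\) both of order \(1/p\), which \(q=(2+\eta)p\) permits.

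\textbf{Step 1 (localization).} By \eqref{eq:anchored-meet}, \(Y_j^{i,1}=Y_j^{i,0}\) once \(j-1\ge i-1+\kappa_i\). Hence \(\Delta_i\) coincides, on \(\{\kappa_i\le \mathfrak b(m)\}\), with the truncated sum \(\Delta_i^{(m)}\) over \(j\le i-1+\mathfrak b(m)\). Set
\[
\Phi_i^{(m)}:=\phi_i(\Xi_i,\Delta_i^{(m)})\one_{\{\kappa_i\le\mathfrak b(m)\}},
\qquad
F_i^{(m)}:=\E[\Phi_i^{(m)}\mid\mathcal X].
\]
Then \(\Phi_i^{(m)}\) is \(\mathcal J^i_{\mathfrak b(m)}\)-measurable, since \(\kappa_i\) is a stopping time. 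By Lemma~\ref{lem:block-coupling}(iii), \(F_i^{(m)}\) depends only on \(X_{i-1},\dots,X_{i-1+\mathfrak b(m)}\).

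Next I write \(F_i=\sum_{m\ge0}(F_i^{(m)}-F_i^{(m-1)})\) along block boundaries and bound each increment in \(L^s\). Using \eqref{eq:suffix-functional-domination}, Hölder's inequality, and the conditional tail \eqref{eq:anchored-tail}, the increment is controlled by \(\Lambda\norm{\Xi_i}_{L^{2s'}}\norm{\Delta_i}_{L^{2s'}}\rho_{\mathrm c}^{m(1/(2s)-1/q)}\), with \(2s'\le q\). For the moments I use \(\norm{\Xi_i}_{L^q}\le 2a_i\), and bound \(\norm{\Delta_i\one}\) by \(\sum_{j\ge i}a_j\,\rho_{\mathrm c}^{(j-i)c/q}\), again via the meeting-time tail. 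This yields a local size \(\Lambda\,a_i\bar a_i\) with \(\bar a_i:=\sum_j\rho^{|j-i|c/p}a_j\). Summing the geometric factors in \(m\) costs a factor of order \(p\).

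\textbf{Step 2 (mixing and square function).} For each window functional \(F_i^{(m)}-\E F_i^{(m)}\), I decompose into martingale differences along \(\mathcal P_k\), namely \(\E[\cdot\mid\mathcal P_k]-\E[\cdot\mid\mathcal P_{k-1}]\). The projections onto the past, \(\norm{\E[F_i^{(m)}\mid\mathcal P_k]-\E F_i^{(m)}}_{L^p}\) for \(k<i-1\), decay like \(\rho_0^{(i-1-k)(1-1/s)\cdot}\). This follows by the Hölder argument behind Lemma~\ref{lem:Lr-Poisson} applied in \(L^s\to L^p\), which is the purpose of \(s>p\). Inside the window the differences are simply bounded by the \(L^s\) norm.

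Collecting, \(\sum_i(F_i-\E F_i)=\sum_k d_k\) is a martingale with \(\norm{d_k}_{L^p}\lesssim p\cdot p\,\Lambda\sum_i\rho^{|i-k|c/p}a_i\bar a_i\). Here one factor of \(p\) comes from the mixing decay, another from the coupling tail in Step 1, and an additional window length \(L\) is absorbed into the constant. Burkholder's inequality (the version with constant of order \(\sqrt p\) for the square function in \(L^{p/2}\), or Rosenthal/Pinelis) then gives
\[
\sqrt p\Big(\sum_k\norm{d_k}_{L^p}^2\Big)^{1/2}
\lesssim p^{5/2}\Lambda\Big(\sum_i a_i^2\bar a_i^2\Big)^{1/2}.
\]
Finally, Young's convolution inequality with geometric weights, together with \(a_i^2a_j^2\le(a_i^4+a_j^4)/2\), gives \(\sum_i a_i^2\bar a_i^2\le C\,p^{O(1)}\norm{A}_4^4\). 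The \(p\)-losses from the weight sums must be tracked here.

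The main obstacle is this bookkeeping of \(p\)-powers. Every geometric series with ratio \(\rho^{c/p}\) costs a factor of \(p\), and the claimed total is exactly \(p^{5/2}\). I would try to merge the Step 1 tail sum and the Step 2 mixing sum into a single geometric series in the distance \(|k-i|\), so as to avoid an extra \(p\). A secondary delicate point is justifying the Hölder step with \(\one_{\{\kappa_i>m\}}\) under conditioning on \(\mathcal P_{i-1}\) while \(\Delta_i\) involves future increments. For this I would use \eqref{eq:anchored-tail} conditionally and then apply stationarity to the moments \(a_j\).
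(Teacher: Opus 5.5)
Your architecture is the paper's. You pass to $F_i=\E[\Phi_i\mid\mathcal X]$ by the tower property and conditional Jensen, localize with the block windows of Lemma~\ref{lem:block-coupling}(iii), and split into projections $\Pi_k$ along $\mathcal P_k$. You use mixing for past indices and the meeting-time tail for future indices, then finish with a Hilbert-space Burkholder inequality (constant $\sqrt{p-1}$) and Young's inequality. Your use of the $L^r$-contraction of Lemma~\ref{lem:Lr-Poisson} for the past projections is a valid substitute for the paper's Berbee/$\beta$-mixing lemma, once you make the Markov reduction $\E[Z\mid\mathcal P_k]=(P^{i-1-k}\tilde z)(X_k)$, $\tilde z(x):=\E[Z\mid X_{i-1}=x]$, explicit. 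It even gives a rate that does not degrade with $p$.

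\textbf{The gap.} The gap is exactly the $p$-bookkeeping you flag: as written, the plan does not reach $B_p=p^{5/2}\norm{A}_4^2$. The loss comes from telescoping $F_i=\sum_m(F_i^{(m)}-F_i^{(m-1)})$ and then applying the triangle inequality over $m$. For a future index $k=i-1+\ell$, every piece with $\mathfrak b(m)\geq\ell$ contributes, and $\sum_{m\gtrsim\ell}e^{-cm/p}\asymp p\,e^{-c\ell/p}$. So you only get $\norm{\Pi_kF_i}_{L^p}\lesssim p\,\Lambda e^{-c|k-i|/p}a_i\overline a_i$. Three further factors then accumulate:
\begin{itemize}
\item Young's inequality in $k$ costs $\sum_\ell e^{-c|\ell|/p}\asymp p$. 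Your displayed Burkholder step omits this factor.
\item Burkholder costs $\sqrt p$.
\item The bound $\norm{(a_i\overline a_i)_i}_2\leq\norm{A}_4\norm{(\overline a_i)_i}_4\lesssim p\norm{A}_4^2$ costs one more $p$. It is tight in the balanced regime, so your ``$p^{O(1)}$'' is exactly $p^2$ inside the square.
\end{itemize}
The honest total is $p^{7/2}\Lambda\norm{A}_4^2$, which would turn $p^3$ into $p^4$ in Theorem~\ref{thm:main}.

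\textbf{The missing idea.} Do not telescope. Instead, choose a single localization level for each projection index and bound the whole remainder at once.
\begin{itemize}
\item For $\ell\geq 2L$, take $m=\lfloor\ell/2\rfloor$. Then $\mathfrak b(m)\leq m+L-1\leq\ell-1$, so $F_i^{(m)}$ is $\mathcal P_{i-2+\ell}$-measurable and $\Pi_{i-1+\ell}F_i=\Pi_{i-1+\ell}(F_i-F_i^{(m)})$. Hence
\[
\norm{\Pi_{i-1+\ell}F_i}_{L^p}\leq 2\norm{\Phi_i\one_{\{\kappa_i>m\}}}_{L^p}\leq 2\norm{\Phi_i}_{L^s}\,\Pp(\kappa_i>m)^{1/p-1/s}\lesssim \Lambda e^{-c\ell/p}a_i\overline a_i ,
\]
with no prefactor $p$.
\item For $\ell\leq -1$, apply the mixing estimate to $F_i-\E F_i$ itself rather than to each window piece. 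This works because $F_i-\E F_i$ is $\sigma(X_r:r\geq i-1)$-measurable and $\norm{F_i}_{L^s}\leq\norm{\Phi_i}_{L^s}$.
\item The finitely many lags $0\leq\ell<2L$ are bounded trivially.
\end{itemize}
This yields $\norm{\Pi_{i-1+\ell}F_i}_{L^p}\lesssim\Lambda e^{-c|\ell|/p}a_i\overline a_i$ for all $\ell$. You can then use either your organization (Burkholder in $k$, then Young) or the paper's (Burkholder over $i$ at fixed lag $\ell$, then Minkowski over $\ell$). Either way you get $\sqrt p\cdot p\cdot\norm{(a_i\overline a_i)_i}_2\lesssim p^{5/2}\Lambda\norm{A}_4^2$, as claimed. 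This route also removes your third exponent $s'$: the localization error is measured in $L^p$, so only $\Phi_i\in L^s$ with $2s<q$ is needed.
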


By the conditional
exchangeability in Lemma~\ref{lem:block-coupling}(i),
\begin{equation}
\label{eq:tensor-central-symmetry}
    (\Xi_i,\Delta_i)
    \stackrel{\mathrm d}{=}
    (-\Xi_i,-\Delta_i),
    \qquad
    1\leq i\leq n.
\end{equation}
Apply Lemma~\ref{lem:suffix-fluctuation} to
\[
    \Phi_i
    :=
    \Xi_i
    \one_{\{\norm{\Delta_i}>\tau_t\}}.
\]
By \eqref{eq:tensor-central-symmetry},
\(\E[\Phi_i]=0\), while $\norm{\Phi_i}
    \leq
    \tau_t^{-1}\norm{\Xi_i}\norm{\Delta_i}.$ Since $R_t
    =
    \sum_{i=1}^n\E[\Phi_i\mid W],$ Lemma~\ref{lem:suffix-fluctuation} gives
\[
    \norm{R_t}_{L^p}
    \leq
    C(\eta,C_0,\rho_0)\tau_t^{-1}B_p,
\]
which proves \eqref{eq:R-bound}. Recall from \eqref{eq:Stein-covariance-calibration} that
\[
    \frac12
    \sum_{i=1}^n
        \E[\Xi_i\ot\Delta_i]
    =
    \E[G\ot\Delta]
    =
    I_d.
\]
Therefore,
\begin{align*}
    T_{1,t}-I_d
    ={}&
    \frac12\sum_{i=1}^n
    \Bigl(
        \E[\Xi_i\ot\Delta_i\mid W]
        -
        \E[\Xi_i\ot\Delta_i]
    \Bigr)
    \\
    &-
    \frac12\sum_{i=1}^n
    \Bigl(
        \E[
            \Xi_i\ot\Delta_i
            \one_{\{\norm{\Delta_i}>\tau_t\}}
            \mid W
        ]
        -
        \E[
            \Xi_i\ot\Delta_i
            \one_{\{\norm{\Delta_i}>\tau_t\}}
        ]
    \Bigr)-
    \frac12\sum_{i=1}^n
        \E\!\left[
            \Xi_i\ot\Delta_i
            \one_{\{\norm{\Delta_i}>\tau_t\}}
        \right].
\end{align*}
The first two sums are bounded in \(L^p(\HS)\) by
\(C(\eta,C_0,\rho_0)B_p\) through Lemma~\ref{lem:suffix-fluctuation}, since the
Hilbert--Schmidt norm of each corresponding functional is bounded by
\(\norm{\Xi_i}\norm{\Delta_i}\). The last sum has Hilbert--Schmidt norm at
most \(\frac12\mathfrak m_1(\tau_t)\). Hence
\[
    \norm{T_{1,t}-I_d}_{L^p(\HS)}
    \leq
    C(\eta,C_0,\rho_0)\left\{
        B_p+\mathfrak m_1(\tau_t)
    \right\},
\]
which proves \eqref{eq:T1-bound}. For \(k\geq2\), apply Lemma~\ref{lem:suffix-fluctuation} to
\[
    \Phi_i
    :=
    \Xi_i\ot\Delta_i^{\ot k}
    \one_{\{\norm{\Delta_i}\leq\tau_t\}}.
\]
Since the Hilbert--Schmidt norm is multiplicative on rank-one tensors,
\[
    \norm{\Phi_i}_{\HS}
    \leq
    \tau_t^{k-1}
    \norm{\Xi_i}\norm{\Delta_i}.
\]
Therefore,
\begin{equation}
\label{eq:Tk-centered}
    \norm{T_{k,t}-\E [T_{k,t}]}_{L^p(\HS)}
    \leq
    C(\eta,C_0,\rho_0)\tau_t^{k-1}B_p.
\end{equation}
Under \eqref{eq:tensor-central-symmetry}, \(\Phi_i\) is mapped to
\((-1)^{k+1}\Phi_i\). Consequently,
\[
    \E[T_{k,t}]=0
    \qquad\text{when \(k\) is even}.
\]
For odd \(k\geq3\),
\[
    \norm{\E [T_{k,t}]}_{\HS}
    \leq
    \frac12\sum_{i=1}^n
        \E\!\left[
            \norm{\Xi_i}
            \norm{\Delta_i}^k
            \one_{\{\norm{\Delta_i}\leq\tau_t\}}
        \right]\leq
    \frac12
    \tau_t^{k-3}
    \mathfrak m_3(\tau_t).
\]
Combining this bound with \eqref{eq:Tk-centered} proves
\eqref{eq:Tk-bound}. For every \(\tau>0\),
\[
    \one_{\{\norm{\Delta_i}>\tau\}}
    \leq
    \tau^{-2}\norm{\Delta_i}^2,
    \qquad
    \norm{\Delta_i}^2
    \one_{\{\norm{\Delta_i}\leq\tau\}}
    \leq
    \tau^2.
\]
Multiplying by
\(\norm{\Xi_i}\norm{\Delta_i}\), summing over \(i\), and combining with
the trivial bounds yields
\[
    \mathfrak m_1(\tau)
    \leq
    \min\{C_1,\tau^{-2}C_3\},
    \qquad
    \mathfrak m_3(\tau)
    \leq
    \min\{C_3,\tau^2C_1\},
\]
which proves \eqref{eq:M-min}. It remains to control \(C_1\) and \(C_3\). For \(r\in\{2,4\}\), Lemma~\ref{lem:block-coupling}(ii) gives
\[
    Y_{i+\ell}^{i,1}-Y_{i+\ell}^{i,0}
    =
    \bigl(
        Y_{i+\ell}^{i,1}-Y_{i+\ell}^{i,0}
    \bigr)
    \one_{\{\kappa_i>\ell\}}.
\]
By Lemma~\ref{lem:block-coupling}(i), both coupled coordinates have the
same marginal law as the stationary chain. Hence H\"older's inequality and
the geometric tail \eqref{eq:anchored-tail} imply
\[
    \norm{
        Y_{i+\ell}^{i,1}-Y_{i+\ell}^{i,0}
    }_{L^r}
    \leq
    2a_{i+\ell}
    \Pp(\kappa_i>\ell)^{1/r-1/q}
    \leq
    C(\eta,C_0,\rho_0)\rho^\ell a_{i+\ell},
\]
for some \(\rho\in(0,1)\) depending only on
\((\eta,C_0,\rho_0)\). Minkowski's inequality and Young's convolution inequality therefore give
\begin{equation}
\label{eq:Delta-low-moment}
    \norm{\Delta_i}_{L^r}
    \leq
    C(\eta,C_0,\rho_0)\sum_{\ell=0}^{n-i}
        \rho^\ell a_{i+\ell},
    \qquad
    \left\|
        \left(
            \sum_{\ell=0}^{n-i}
                \rho^\ell a_{i+\ell}
        \right)_{i=1}^n
    \right\|_r
    \leq
    C(\eta,C_0,\rho_0)\norm{A}_r.
\end{equation}
Since $\norm{\Xi_i}_{L^2}
    \leq
    2\norm{Y_i}_{L^2},$ Cauchy--Schwarz, martingale orthogonality, and
\(\operatorname{Cov}(S_n)=I_d\) imply
\begin{align*}
    C_1\leq
    \sum_{i=1}^n
        \norm{\Xi_i}_{L^2}
        \norm{\Delta_i}_{L^2}\leq
    C(\eta,C_0,\rho_0)
    \left(
        \sum_{i=1}^n
            \E\norm{Y_i}^2
    \right)^{1/2}
    \left\|
        \left(
            \sum_{\ell=0}^{n-i}
                \rho^\ell a_{i+\ell}
        \right)_{i=1}^n
    \right\|_2\leq
    C(\eta,C_0,\rho_0)\sqrt d\,\norm{A}_2.
\end{align*}
Similarly,
\(\norm{\Xi_i}_{L^4}\leq2a_i\), and H\"older's inequality together with
\eqref{eq:Delta-low-moment} gives
\begin{align*}
    C_3
    \leq
    \sum_{i=1}^n
        \norm{\Xi_i}_{L^4}
        \norm{\Delta_i}_{L^4}^3\leq
    C(\eta,C_0,\rho_0)\norm{A}_4
    \left\|
        \left(
            \sum_{\ell=0}^{n-i}
                \rho^\ell a_{i+\ell}
        \right)_{i=1}^n
    \right\|_4^3\leq
    C(\eta,C_0,\rho_0)\norm{A}_4^4.
\end{align*}
This proves \eqref{eq:C1-C3-bounds} and completes the proof of
Lemma~\ref{lem:tensors}.

\subsubsection{Proof of Lemma~\ref{lem:suffix-fluctuation}}
\label{sec:proof-suffix-fluctuation}

We first record two standard estimates in the forms needed below. For two sigma-fields $\mathcal A=\sigma(X)$ and $\mathcal B=\sigma(Y)$, let
$\beta(\mathcal A,\mathcal B)$ denote their absolute-regularity coefficient,
defined by
\[
    \beta(\mathcal A,\mathcal B)
    :=
    \E\!\left[
        \norm{
            \law(Y\mid X)-\law(Y)
        }_{\TV}
    \right].
\]

\begin{lemma}
\label{lem:beta-conditional-expectation}
Let \(1\leq p<s<\infty\), let \(\mathsf H\) be a finite-dimensional real
Hilbert space, and let \(Z\in L^s(\mathsf H)\) be centered and
\(\mathcal B\)-measurable. Then
\begin{equation}
\label{eq:beta-conditional-general}
    \norm{\E[Z\mid\mathcal A]}_{L^p(\mathsf H)}
    \leq
    2\,
    \beta(\mathcal A,\mathcal B)^{1/p-1/s}
    \norm{Z}_{L^s(\mathsf H)}.
\end{equation}
\end{lemma}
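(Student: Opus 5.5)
The plan is a duality argument, combined with a coupling/total-variation estimate and Hölder's inequality. Write $\mathcal A=\sigma(X)$ and $\mathcal B=\sigma(Y)$ with $Z=f(Y)$ for a measurable $f$ (Doob–Dynkin). The case $p=1$ is handled the same way, so focus on $p>1$ and set $p'=p/(p-1)$. Since $\mathsf H$ is a Hilbert space, $L^p(\mathsf H)$-duality gives
\[
\norm{\E[Z\mid\mathcal A]}_{L^p(\mathsf H)}
=\sup\bigl\{\E\ip{Z}{\xi}:\ \xi\ \mathcal A\text{-measurable},\ \norm{\xi}_{L^{p'}(\mathsf H)}\le1\bigr\}.
\]
Fix such a $\xi=\xi(X)$. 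Because $Z$ is centered and $\xi$ is square-integrable after truncation, we may write
\[
\E\ip{Z}{\xi}=\E\ip{f(Y)}{\xi(X)}-\E\ip{f(\tilde Y)}{\xi(X)},
\]
where $\tilde Y$ is an independent copy of $Y$, independent of $X$. If needed, truncate $\xi$ and pass to the limit by monotone/dominated convergence.

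Next I would couple. Since the spaces are standard Borel (one can reduce to this, as $X,Y$ may be taken real-valued by a Borel isomorphism), for each $x$ take a maximal coupling of $\law(Y\mid X=x)$ and $\law(Y)$, measurable in $x$. This produces, on an extension, a pair $(Y^\ast,\tilde Y^\ast)$ with the following properties: $(X,Y^\ast)\stackrel{\mathrm d}{=}(X,Y)$, $\tilde Y^\ast$ is independent of $X$ with law $\law(Y)$, and
\[
\Pp(Y^\ast\ne\tilde Y^\ast\mid X)=\norm{\law(Y\mid X)-\law(Y)}_{\TV}.
\]
Hence $\Pp(Y^\ast\neq\tilde Y^\ast)=\beta(\mathcal A,\mathcal B)=:\beta$. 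Let $E=\{Y^\ast\ne\tilde Y^\ast\}$. Then
\[
\E\ip{Z}{\xi}=\E\bigl[\ip{f(Y^\ast)-f(\tilde Y^\ast)}{\xi(X)}\one_E\bigr].
\]

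Now apply the three-exponent Hölder inequality with exponents $s$, $p'$, and $r$ where $1/r=1/p-1/s$ (so $1/s+1/p'+1/r=1$). This gives
\[
\abs{\E\ip{Z}{\xi}}\le\bigl(\norm{f(Y^\ast)}_{L^s}+\norm{f(\tilde Y^\ast)}_{L^s}\bigr)\norm{\xi}_{L^{p'}}\Pp(E)^{1/p-1/s}\le2\norm{Z}_{L^s(\mathsf H)}\beta^{1/p-1/s},
\]
using that both $f(Y^\ast)$ and $f(\tilde Y^\ast)$ have the law of $Z$. Taking the supremum over $\xi$ yields \eqref{eq:beta-conditional-general}.

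The main obstacle is the measurable construction of the conditional maximal coupling: it requires a regular conditional distribution and measurable selection of maximal couplings, which is available on standard Borel spaces as already invoked in Section~\ref{sec:proof-block-coupling}. An alternative that avoids the coupling is to bound $\abs{\E[\ip{f(Y)}{\xi(X)}]-\E[\ip{f(\tilde Y)}{\xi(X)}]}$ via the signed measure $\law(X,Y)-\law(X)\otimes\law(Y)$, which has total mass $2\beta$, and then apply a Hölder-type inequality for that measure. I would use the coupling route since it gives the constant $2$ directly.
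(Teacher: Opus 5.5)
Your proof is correct and follows essentially the same route as the paper. The paper cites Berbee's coupling lemma to obtain a copy $Z^*$ independent of $\mathcal A$ with $\Pp(Z\neq Z^*)\leq\beta(\mathcal A,\mathcal B)$, and then bounds $\norm{\E[Z-Z^*\mid\mathcal A]}_{L^p}\leq\norm{(Z-Z^*)\one_{\{Z\neq Z^*\}}}_{L^p}$ by conditional Jensen and H\"older; you instead build that coupling by hand through a conditional maximal coupling (the standard proof of Berbee's lemma) and replace conditional Jensen with $L^p$--$L^{p'}$ duality and a three-exponent H\"older inequality, which are minor variations giving the same constant $2$ and exponent $1/p-1/s$.
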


\textit{Proof of Lemma~\ref{lem:beta-conditional-expectation}.}
By Berbee's coupling lemma \citep{berbee1979random}, on an extension there
exists a copy \(Z^*\stackrel{\mathrm d}=Z\), independent of
\(\mathcal A\), such that
\[
    \Pp(Z\neq Z^*)
    \leq
    \beta(\mathcal A,\sigma(Z))
    \leq
    \beta(\mathcal A,\mathcal B).
\]
Since \(\E[ Z^*]=\E [Z]=0\), conditional Jensen's inequality and H\"older's
inequality give
\[
    \norm{\E[Z\mid\mathcal A]}_{L^p}
    \leq
    \norm{
        (Z-Z^*)\one_{\{Z\neq Z^*\}}
    }_{L^p}
    \leq
    2\norm{Z}_{L^s}
    \Pp(Z\neq Z^*)^{1/p-1/s},
\]
which proves \eqref{eq:beta-conditional-general}.
\hfill\(\square\)

\begin{lemma}[{\citealp[Eq.~(2.5)]{dedecker2015moment}}]
\label{lem:hilbert-BR}
Let \((d_i,\mathcal G_i)_{i=1}^N\) be a martingale-difference sequence
with values in a finite-dimensional real Hilbert space \(\mathsf H\), and
let \(p\geq2\). If deterministic numbers \(b_i\geq0\) satisfy $\norm{d_i}_{L^p(\mathsf H)}
    \leq
    b_i,$ then
\begin{equation}
\label{eq:hilbert-BR}
    \norm{
        \sum_{i=1}^N d_i
    }_{L^p(\mathsf H)}
    \leq
    \sqrt{p-1}\,\norm{(b_i)_{i=1}^N}_2.
\end{equation}
\end{lemma}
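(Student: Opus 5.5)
The plan is to prove the squared form $\norm{\sum_{i=1}^N d_i}_{L^p(\mathsf H)}^2\le (p-1)\sum_{i=1}^N b_i^2$ by induction on $N$, following Rio's one-step argument. Writing $S_k:=\sum_{i\le k}d_i$, the goal is the one-step inequality
\[
    \norm{S_{k}}_{L^p(\mathsf H)}^2
    \leq
    \norm{S_{k-1}}_{L^p(\mathsf H)}^2
    +
    (p-1)\norm{d_k}_{L^p(\mathsf H)}^2 .
\]
Iterating it from $S_0=0$ and using $\norm{d_i}_{L^p}\le b_i$ gives \eqref{eq:hilbert-BR}. All terms are in $L^p$, because $d_i\in L^p(\mathsf H)$ and the sum is finite.

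To obtain the one-step inequality, fix $k$ and set $x:=S_{k-1}$ and $h:=d_k$. Let $\varphi(y):=\norm{y}_{\mathsf H}^p$, which is $C^2$ on $\mathsf H$ since $p\ge2$. Define
\[
    g(t):=\E\,\varphi(x+th),\qquad t\in[0,1].
\]
First I compute the derivatives of $\varphi$:
\[
    D\varphi(y)[h]=p\norm{y}^{p-2}\ip{y}{h},\qquad
    D^2\varphi(y)[h,h]=p\norm{y}^{p-2}\norm{h}^2+p(p-2)\norm{y}^{p-4}\ip{y}{h}^2 .
\]
From these, $D^2\varphi(y)[h,h]\le p(p-1)\norm{y}^{p-2}\norm{h}^2$. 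The derivatives are dominated by $C(\norm{x}+\norm{h})^{p-1}\norm{h}$ and $C(\norm{x}+\norm{h})^{p-2}\norm{h}^2$, which are integrable by H\"older. Hence one may differentiate under $\E$, and this gives two facts.
\begin{itemize}
\item $g'(0)=p\,\E[\norm{x}^{p-2}\ip{x}{h}]=0$. This holds because $\norm{x}^{p-2}x$ is $\mathcal G_{k-1}$-measurable and $\E[h\mid\mathcal G_{k-1}]=0$.
\item $g''(t)\le p(p-1)\,g(t)^{1-2/p}\norm{h}_{L^p}^2$. This follows from H\"older's inequality with exponents $p/(p-2)$ and $p/2$.
\end{itemize}

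Next I set $u:=g^{2/p}$, so that $u(0)=\norm{x}_{L^p}^2$ and $u(1)=\norm{x+h}_{L^p}^2$. Where $g>0$,
\[
    u''=\tfrac2p g^{2/p-1}g''+\tfrac2p\bigl(\tfrac2p-1\bigr)g^{2/p-2}(g')^2\le \tfrac2p g^{2/p-1}g''\le 2(p-1)\norm{h}_{L^p}^2 ,
\]
because $2/p-1\le0$. In addition, $u'(0)=0$. Integrating twice then yields $u(1)\le u(0)+(p-1)\norm{h}_{L^p}^2$, which is the one-step inequality.

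The main technical obstacle is the possible vanishing of $g$, where $u=g^{2/p}$ is not differentiable. I would handle it by replacing $\varphi$ with $\varphi_\varepsilon(y):=(\varepsilon+\norm{y}^2)^{p/2}$. For this function $g_\varepsilon>0$, and one checks
\[
    D^2\varphi_\varepsilon(y)[h,h]\le p(p-1)(\varepsilon+\norm{y}^2)^{(p-2)/2}\norm{h}^2,
\]
so that $g_\varepsilon''\le p(p-1)g_\varepsilon^{1-2/p}\norm{h}_{L^p}^2$ by the same H\"older step. The martingale property still kills $g_\varepsilon'(0)$. Running the argument above gives $g_\varepsilon(1)^{2/p}\le g_\varepsilon(0)^{2/p}+(p-1)\norm{h}_{L^p}^2$, and letting $\varepsilon\downarrow0$ by dominated convergence completes the step. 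The remaining work is routine: the justification of differentiation under the expectation, and the bound on $\varphi_\varepsilon$'s Hessian.
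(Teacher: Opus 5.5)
Your proof is correct: the regularization $\varphi_\varepsilon(y)=(\varepsilon+\norm{y}^2)^{p/2}$, the Hessian bound $D^2\varphi_\varepsilon(y)[h,h]\le p(p-1)(\varepsilon+\norm{y}^2)^{(p-2)/2}\norm{h}^2$, the vanishing of $g_\varepsilon'(0)$ by the martingale property (trivially when $k=1$), and the estimate $u_\varepsilon''\le 2(p-1)\norm{h}_{L^p}^2$ for $u_\varepsilon=g_\varepsilon^{2/p}$ all check out and yield exactly the constant $p-1$. The paper gives no proof of its own and simply imports the bound from \citet[Eq.~(2.5)]{dedecker2015moment}; your argument is the standard Rio-type one-step smoothness proof of this inequality, specialized to a Hilbert space, so it is essentially the same route as the cited result.
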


We now prove Lemma~\ref{lem:suffix-fluctuation}.  Set
\[
    s:=(1+\eta/4)p,
    \qquad
    2s=(2+\eta/2)p<q=(2+\eta)p.
\]
For \(0\leq\ell\leq n-i\), Lemma~\ref{lem:block-coupling}(ii) gives
\[
    Y_{i+\ell}^{i,1}-Y_{i+\ell}^{i,0}
    =
    \bigl(
        Y_{i+\ell}^{i,1}-Y_{i+\ell}^{i,0}
    \bigr)
    \one_{\{\kappa_i>\ell\}}.
\]
By Lemma~\ref{lem:block-coupling}(i), both coordinates have the same
marginal law as the stationary chain, and hence
\[
    \norm{
        Y_{i+\ell}^{i,1}-Y_{i+\ell}^{i,0}
    }_{L^q}
    \leq
    2a_{i+\ell}.
\]
Since $\frac1{2s}-\frac1q
    =
    \frac{c_\eta}{p}$ for some constant \(c_\eta>0\), H\"older's inequality and
\eqref{eq:anchored-tail} yield
\begin{equation*}
    \norm{
        Y_{i+\ell}^{i,1}-Y_{i+\ell}^{i,0}
    }_{L^{2s}}
    \leq
    C(\eta,C_0,\rho_0) e^{-C(\eta,C_0,\rho_0)\ell/p}a_{i+\ell}.
\end{equation*}
Define
\[
    \overline a_i
    :=
    \sum_{\ell=0}^{n-i}
        e^{-C(\eta,C_0,\rho_0)\ell/p}a_{i+\ell}.
\]
Minkowski's and H\"older's inequalities, together with
\eqref{eq:suffix-functional-domination}, give
\begin{equation}
\label{eq:suffix-basic-moments}
    \norm{\Xi_i}_{L^{2s}}
    \leq
    2a_i,
    \qquad
    \norm{\Delta_i}_{L^{2s}}
    \leq
    C(\eta,C_0,\rho_0)\overline a_i,
    \qquad
    \norm{\Phi_i}_{L^s(\mathsf H)}
    \leq
    C(\eta,C_0,\rho_0)\Lambda a_i\overline a_i.
\end{equation}
Let
\[
    \Pi_jZ
    :=
    \E[Z\mid\mathcal P_j]
    -
    \E[Z\mid\mathcal P_{j-1}],
    \qquad j\in\mathbb Z,
\]
and set
\[
    U_i:=\E[\Phi_i\mid\mathcal X].
\]
Since \(\Phi_i\) is an integrable functional of the two coupled suffixes,
Lemma~\ref{lem:block-coupling}(iii) implies that \(U_i\) is 
measurable with respect to $\sigma(X_r:r\geq i-1).$ For later use, put
\[
    \mathcal G_{j,m}
    :=
    \sigma(X_r:r\geq j+m),
    \qquad m\geq0.
\]
By the standard characterization of the absolute-regularity coefficient for
stationary Markov chains \citep[Section~3, Equation~(3.1)]{samur2004regularity},
\begin{equation}
\label{eq:future-beta-mixing}
    \beta(\mathcal P_j,\mathcal G_{j,m})=\int_{\mathsf X}
        \norm{P^m(x,\cdot)-\pi}_{\TV}\,
        \pi(\mathrm dx)
    \leq
    C(\eta,C_0,\rho_0) \rho_0^m.
\end{equation}
Combining Lemma~\ref{lem:beta-conditional-expectation} with
\eqref{eq:future-beta-mixing}, and using $\frac1p-\frac1s
    =
    \frac{\eta}{(4+\eta)p},$ shows that every centered \(Z\in L^s(\mathsf H)\), measurable with respect
to \(\mathcal G_{j,m}\), satisfies
\begin{equation}
\label{eq:beta-projection}
    \norm{
        \E[Z\mid\mathcal P_j]
    }_{L^p(\mathsf H)}
    \leq
    C(\eta,C_0,\rho_0) e^{-C(\eta,C_0,\rho_0)m/p}\norm{Z}_{L^s(\mathsf H)}.
\end{equation}
For \(m\geq0\), define
\[
    U_i^{(m)}
    :=
    \E\!\left[
        \Phi_i\one_{\{\kappa_i\leq m\}}
        \,\middle|\,
        \mathcal X
    \right].
\]
By definition, we have $\Phi_i\one_{\{\kappa_i\leq m\}}$ is \(\mathcal J_{\mathfrak b(m)}^i\)-measurable.
Lemma~\ref{lem:block-coupling}(iii) therefore yields
\begin{equation}
\label{eq:Ui-m-window}
    U_i^{(m)}
    \quad\text{is measurable with respect to}\quad
    \sigma\!\left(
        X_{i-1},\ldots,
        X_{i-1+\mathfrak b(m)}
    \right).
\end{equation}
Furthermore, conditional Jensen's inequality, H\"older's inequality,
\eqref{eq:suffix-basic-moments}, and
\eqref{eq:anchored-tail} give
\begin{align}
    \norm{U_i-U_i^{(m)}}_{L^p(\mathsf H)}
    \leq
    \norm{
        \Phi_i\one_{\{\kappa_i>m\}}
    }_{L^p(\mathsf H)}&\leq
    \norm{\Phi_i}_{L^s(\mathsf H)}
    \Pp(\kappa_i>m)^{1/p-1/s}
    \nonumber\\
    &\leq
    C(\eta,C_0,\rho_0)\Lambda e^{-C(\eta,C_0,\rho_0)m/p}a_i\overline a_i.
    \label{eq:suffix-localization}
\end{align}
We claim that, for every \(\ell\in\mathbb Z\),
\begin{equation}
\label{eq:suffix-projection-decay}
    \norm{
        \Pi_{i-1+\ell}U_i
    }_{L^p(\mathsf H)}
    \leq
    C(\eta,C_0,\rho_0)\Lambda
    e^{-C(\eta,C_0,\rho_0)|\ell|/p}
    a_i\overline a_i.
\end{equation}
For \(\ell\leq-1\), apply \eqref{eq:beta-projection} to
\(U_i-\E [U_i]\), which is measurable with respect to
\(\sigma(X_r:r\geq i-1)\). Since
\[
    \norm{U_i-\E [U_i]}_{L^s}
    \leq
    2\norm{\Phi_i}_{L^s},
\]
the two conditional expectations defining
\(\Pi_{i-1+\ell}\) give
\[
    \norm{
        \Pi_{i-1+\ell}U_i
    }_{L^p}
    \leq
    C(\eta,C_0,\rho_0)\Lambda
    e^{-C(\eta,C_0,\rho_0)|\ell|/p}
    a_i\overline a_i.
\]
For \(\ell\geq2L\), take $m:=\lfloor\ell/2\rfloor.$ Then
\[
    \mathfrak b(m)
    \leq
    m+L-1
    \leq
    \ell-1.
\]
By \eqref{eq:Ui-m-window}, \(U_i^{(m)}\) is
\(\mathcal P_{i-2+\ell}\)-measurable, and hence
\[
    \Pi_{i-1+\ell}U_i^{(m)}=0.
\]
Consequently,
\[
    \norm{
        \Pi_{i-1+\ell}U_i
    }_{L^p}
    \leq
    2\norm{
        U_i-U_i^{(m)}
    }_{L^p}\leq
    C(\eta,C_0,\rho_0)\Lambda
    e^{-C(\eta,C_0,\rho_0)\ell/p}
    a_i\overline a_i
\]
by \eqref{eq:suffix-localization}. Finally, for the finitely many values
\(0\leq\ell<2L\),
\eqref{eq:suffix-basic-moments} give
\[
    \norm{
        \Pi_{i-1+\ell}U_i
    }_{L^p}
    \leq
    C(\eta,C_0,\rho_0)\Lambda a_i\overline a_i.
\]
Increasing \(C(\eta,C_0,\rho_0)\) if necessary proves
\eqref{eq:suffix-projection-decay} for all \(\ell\in\mathbb Z\).

For
\(r,s_0\geq0\), telescoping gives
\begin{equation}
\label{eq:bilateral-finite-telescope}
    \sum_{\ell=-r}^{s_0}
        \Pi_{i-1+\ell}U_i
    =
    \E[
        U_i\mid\mathcal P_{i-1+s_0}
    ]
    -
    \E[
        U_i\mid\mathcal P_{i-2-r}
    ].
\end{equation}
The first conditional expectation converges to \(U_i\) in \(L^p\) by
upward martingale convergence, since $\sigma\!\left(
        \bigcup_{j\in\mathbb Z}\mathcal P_j
    \right)
    =
    \mathcal X.$ Moreover, \eqref{eq:beta-projection} yields
\[
    \norm{
        \E[
            U_i-\E [U_i]
            \mid
            \mathcal P_{i-2-r}
        ]
    }_{L^p}
    \leq
    C(\eta,C_0,\rho_0) e^{-C(\eta,C_0,\rho_0)(r+1)/p}
    \norm{U_i-\E [U_i]}_{L^s}
    \longrightarrow
    0.
\]
Thus the second conditional expectation converges to
\(\E [U_i]\) in \(L^p\). Since
\eqref{eq:suffix-projection-decay} also implies
\[
    \sum_{\ell\in\mathbb Z}
        \norm{
            \Pi_{i-1+\ell}U_i
        }_{L^p}
    <\infty,
\]
passing to the limits in \eqref{eq:bilateral-finite-telescope} yields
\begin{equation}
\label{eq:bilateral-decomposition}
    U_i-\E [U_i]
    =
    \sum_{\ell\in\mathbb Z}
        \Pi_{i-1+\ell}U_i
    \qquad\text{in }L^p(\mathsf H).
\end{equation}
For fixed \(\ell\), the sequence $ \{
        \Pi_{i-1+\ell}U_i
    \}_{i=1}^n$ is a martingale-difference sequence with respect to
\(\{\mathcal P_{i-1+\ell}\}_{i=1}^n\).
Apply Lemma~\ref{lem:hilbert-BR} with
\[
    b_i
    :=
    C(\eta,C_0,\rho_0)\Lambda
    e^{-C(\eta,C_0,\rho_0)|\ell|/p}
    a_i\overline a_i.
\]
Together with \eqref{eq:suffix-projection-decay}, this gives
\begin{equation*}
    \norm{
        \sum_{i=1}^n
            \Pi_{i-1+\ell}U_i
    }_{L^p(\mathsf H)}
    \leq
    C(\eta,C_0,\rho_0)\Lambda
    e^{-C(\eta,C_0,\rho_0)|\ell|/p}
    \sqrt p\,
    \norm{
        (a_i\overline a_i)_{i=1}^n
    }_2.
\end{equation*}
Using \eqref{eq:bilateral-decomposition}, Minkowski's inequality, and
\[
    \sum_{\ell\in\mathbb Z}
        e^{-C(\eta,C_0,\rho_0)|\ell|/p}
    \leq
    C(\eta,C_0,\rho_0)p,
\]
we obtain
\begin{equation}
\label{eq:suffix-U-sum}
    \norm{
        \sum_{i=1}^n
            (U_i-\E[ U_i])
    }_{L^p(\mathsf H)}
    \leq
    C(\eta,C_0,\rho_0)\Lambda
    p^{3/2}
    \norm{
        (a_i\overline a_i)_{i=1}^n
    }_2.
\end{equation}
Young's convolution inequality gives, for every \(r\geq1\),
\[
    \norm{
        (\overline a_i)_{i=1}^n
    }_r
    \leq
    \left(
        \sum_{\ell\geq0}
            e^{-C(\eta,C_0,\rho_0)\ell/p}
    \right)
    \norm{A}_r
    \leq
    C(\eta,C_0,\rho_0)p\norm{A}_r.
\]
Consequently,
\[
    \norm{
        (a_i\overline a_i)_{i=1}^n
    }_2
    \leq
    C(\eta,C_0,\rho_0)p\norm{A}_4^2.
\]
Substituting this bound into \eqref{eq:suffix-U-sum} yields
\[
    \norm{
        \sum_{i=1}^n
            (U_i-\E [U_i])
    }_{L^p(\mathsf H)}
    \leq
    C(\eta,C_0,\rho_0)\Lambda
    p^{5/2}\norm{A}_4^2
    =
    C(\eta,C_0,\rho_0)\Lambda B_p.
\]
Finally, \(W\) is \(\mathcal X\)-measurable, so the tower property gives
\[
\begin{aligned}
    \sum_{i=1}^n
        \bigl(
            \E[\Phi_i\mid W]-\E[\Phi_i]
        \bigr)
    &=
    \E\!\left[
        \sum_{i=1}^n
            (U_i-\E [U_i])
        \,\middle|\,
        W
    \right].
\end{aligned}
\]
Conditional Jensen's inequality now proves
\eqref{eq:suffix-fluctuation}.

\section{Conclusion}
We develop higher-order Wasserstein Gaussian approximation bounds for multivariate martingale sums generated by uniformly ergodic Markov chains, while tracking their dependence on the Wasserstein order \(p\) and the dimension \(d\). Our main result attains the optimal \(O(n^{-1/2})\) rate in the balanced-increment regime and, via the Poisson decomposition, yields the same rate for additive functionals of Markov chains. Our analysis formulates the Ornstein--Uhlenbeck relative-score approach of \citet{fang2023p} in terms of antisymmetric Stein couplings and combines it with a novel refresh-then-maximal coupling tailored to Markovian dependence. The resulting construction separates the Stein identity from the control of the coupling increment while preserving the conditional structure needed to derive sharp tensor estimates.

\paragraph{Acknowledgments.} Y.\ Zhang and Q.\ Xie are supported in part by NSF grants CNS-1955997, EPCN-2339794, and EPCN-2432546.

\bibliographystyle{plainnat}
\bibliography{main}

\appendix

\section{Proof of Lemma \ref{lem:Lr-Poisson}}
Stationarity and Jensen's inequality show that both \(P^m\) and \(\Pi\)
are contractions on \(L^1(\pi;\R^d)\). Hence
\begin{equation}
\label{eq:L1-Pm-Pi}
    \norm{(P^m-\Pi)f}_{L^1(\pi)}
    \leq
    2\norm{f}_{L^1(\pi)}.
\end{equation}
For \(f\in L^\infty(\pi;\R^d)\), the dual representation of the Euclidean norm and
\eqref{eq:UGE} then give, for every \(x\in\mathsf X\),
\begin{align*}
    \norm{(P^m-\Pi)f(x)}
    &=
    \sup_{\norm{u}\leq1}
    \left|
        \int_{\mathsf X}
            \ip{u}{f(y)}
            \bigl(P^m(x,\mathrm dy)-\pi(\mathrm dy)\bigr)
    \right|
    \\
    &\leq
    2\norm{f}_{L^\infty(\pi)}
    \norm{P^m(x,\cdot)-\pi}_{\TV}
    \\
    &\leq
    2C_0\rho_0^m\norm{f}_{L^\infty(\pi)}.
\end{align*}
Thus
\begin{equation}
\label{eq:Linf-Pm-Pi}
    \norm{(P^m-\Pi)f}_{L^\infty(\pi)}
    \leq
    2C_0\rho_0^m\norm{f}_{L^\infty(\pi)}.
\end{equation}
Riesz–Thorin theorem,
applied to \(P^m-\Pi\) between
\eqref{eq:L1-Pm-Pi} and \eqref{eq:Linf-Pm-Pi}, yields
\eqref{eq:Lr-uniform-decay}. Now suppose that \(\pi(f)=0\). Then \(P^m f=(P^m-\Pi)f\), so
\eqref{eq:Lr-uniform-decay} gives
\[
    \sum_{m=0}^{\infty}\norm{P^m f}_{L^r(\pi)}
    \leq
    2C_0^{1-1/r}
    \sum_{m=0}^{\infty}\rho_0^{m(1-1/r)}
    \norm{f}_{L^r(\pi)}
    <\infty.
\]
Hence the series converges in \(L^r(\pi;\R^d)\). Let
\(g_{f,N}:=\sum_{m=0}^N P^m f\), and let \(g_f\) denote its \(L^r\)-limit.
Jensen's inequality and invariance of \(\pi\) also show that \(P\) is a
contraction on \(L^r(\pi;\R^d)\). Hence
\(Pg_{f,N}\to Pg_f\) in \(L^r\), while
\[
    g_{f,N}-Pg_{f,N}
    =
    f-P^{N+1}f
    \longrightarrow
    f
    \qquad\text{in }L^r(\pi;\R^d).
\]
Therefore \(g_f-Pg_f=f\) in \(L^r\). Moreover, invariance of \(\pi\) gives
\(\pi(P^m f)=\pi(f)=0\), so \(\pi(g_f)=0\).

\section{Proof of Corollary \ref{cor:additive-functional}}\label{sec:proof-additive-functional}
The Poisson equation \(g-Pg=h\) gives, for every \(i\geq1\),
\begin{align*}
    h(X_i)
    =
    g(X_i)-Pg(X_i)=
    D_i
    +
    Pg(X_{i-1})-Pg(X_i).
\end{align*}
Summing over \(i\) therefore yields the martingale 
decomposition
\begin{equation}
\label{eq:additive-martingale-decomposition}
    \mathsf S_n
    =
    \sum_{i=1}^n D_i
    +
    Pg(X_0)-Pg(X_n).
\end{equation}
Write
\[
    \mathcal M_n
    :=
    \sum_{i=1}^n D_i,
    \qquad
    B_n
    :=
    Pg(X_0)-Pg(X_n),
\]
so that \(\mathsf S_n=\mathcal M_n+B_n\). We first identify the covariance of the martingale component. Since
\((D_i)_{i\geq1}\) is a stationary martingale-difference sequence,
\[
    \operatorname{Cov}(\mathcal M_n)
    =
    n\Sigma_D,
    \qquad
    \Sigma_D
    :=
    \E[D_1D_1^\top].
\]
Moreover, stationarity  gives
\[
    \sup_{n\geq1}
    \norm{B_n}_{L^2}
    \leq
    2\norm{Pg}_{L^2(\pi)}
    \leq
    2\norm{g}_{L^2(\pi)}
    <\infty.
\]
It follows from \eqref{eq:additive-martingale-decomposition} and the
Cauchy--Schwarz inequality that
\[
    \frac{1}{n}
    \operatorname{Cov}(\mathsf S_n)
    -
    \Sigma_D
    \longrightarrow
    0.
\]
Indeed, the covariance of \(B_n\), divided by \(n\), is \(O(n^{-1})\),
while the cross-covariance between \(\mathcal M_n\) and \(B_n\), divided
by \(n\), is \(O(n^{-1/2})\). Hence
\eqref{eq:additive-asymptotic-covariance} implies
\begin{equation}
\label{eq:D-covariance-identity}
    \E[D_1D_1^\top]
    =
    I_d.
\end{equation}
Now define $\widetilde S_n
    :=
    \frac{1}{\sqrt n}
    \sum_{i=1}^n D_i.$ By \eqref{eq:D-covariance-identity}, $\operatorname{Cov}(\widetilde S_n)
    =
    I_d.$ We may therefore apply Theorem~\ref{thm:main} with $ M_i
    =
    \frac{1}{\sqrt n}I_d$ and $Y_i
    =
    \frac{1}{\sqrt n}D_i.$ By stationarity,
\[
    a_i
    =
    \norm{Y_i}_{L^q}
    =
    \frac{b_q}{\sqrt n},
    \qquad 1\leq i\leq n,
\]
where $b_q
    :=
    \norm{
        g(X_1)-Pg(X_0)
    }_{L^q}$. Consequently,
\begin{equation}
\label{eq:additive-A-norms}
    \norm{A}_2=b_q,
    \qquad
    \norm{A}_4^2
    =
    \frac{b_q^2}{\sqrt n}.
\end{equation}
Substituting \eqref{eq:additive-A-norms} into
\eqref{eq:main-bound} gives
\begin{align}
\label{eq:additive-martingale-bound}
\W_p\!\left(
    \law(\widetilde S_n),
    \mathcal N(0,I_d)
\right)
\leq
C(\eta,C_0,\rho_0)
\left[
    \frac{
        p^3b_q^2
        +
        p\,d^{1/4}b_q^{5/2}
    }{\sqrt n}
\right].
\end{align}
It remains to control the boundary term in
\eqref{eq:additive-martingale-decomposition}. Using the coupling in which
\(\mathcal M_n\) and \(B_n\) are defined on the original probability
space, the triangle inequality for \(\W_p\) yields
\begin{align*}
\W_p\!\left(
    \law\!\left(
        \mathsf S_n/\sqrt{n}
    \right),
    \mathcal N(0,I_d)
\right)\leq
\W_p\!\left(
    \law(\widetilde S_n),
    \mathcal N(0,I_d)
\right)
+
\frac{1}{\sqrt n}
\norm{B_n}_{L^p}.
\end{align*}
Since \(q>p\), stationarity and Jensen's inequality imply
\[
    \norm{B_n}_{L^p}
    \leq
    2\norm{Pg}_{L^p(\pi)}
    \leq
    2\norm{Pg}_{L^q(\pi)}
    \leq
    2\norm{g}_{L^q(\pi)}.
\]
Finally,
\begin{align*}
    \norm{
        g(X_1)-Pg(X_0)
    }_{L^q}\leq
    2\norm{g}_{L^q(\pi)}\leq
    C(\eta,C_0,\rho_0)
    \norm{h}_{L^q(\pi)},
\end{align*}
where the last inequality follows from Lemma \ref{lem:Lr-Poisson}. Combining the preceding bounds completes the proof of Corollary~\ref{cor:additive-functional}.

\end{document}